\DeclareFontShape{T1}{lmr}{m}{scit}{<->ssub * lmr/m/it}{}
\newtheorem{lemma}{Lemma}
\newtheorem{theorem}{Theorem}
\newtheorem{corollary}{Corollary}
\newtheorem{remark}{Remark}
\newcommand{\mH}[2]{
\raise1.5pt\hbox{$\displaystyle \mathop{H}^{\rm m}$}\hspace{-1.5pt}_{#1}^{\, #2}}
\newcommand{\eH}[2]{
\raise1.5pt\hbox{$\displaystyle \mathop{H}^{\rm e}$}
\hspace{-1.5pt}_{#1}^{\, #2}}
\newcommand{\mGamma}[2]{
\raise1.5pt\hbox{$\displaystyle \mathop{\Gamma}^{\rm m}$}\hspace{-1.5pt}_{#1}^{\, #2}}
\newcommand{\eGamma}[2]{
\raise1.5pt\hbox{$\displaystyle \mathop{\Gamma}^{\rm e}$}
\hspace{-1.5pt}_{#1}^{\, #2}}
\begin{document}
\begin{frontmatter}
\begin{aug}
\title{Duality induced by an embedding structure of determinantal point process}
\author{
\fnms{Hideitsu} \snm{Hino}\ead[label=e1]{hino@ism.ac.jp}}
\and
\author{\fnms{Keisuke} \snm{Yano}\ead[label=e2]{yano@ism.ac.jp}}
 \address{The Institute of Statistical Mathematics, \\ 10-3 Midori cho, Tachikawa City, Tokyo, 190-8562, Japan. \\
\printead{e1,e2}}
\end{aug}
\begin{abstract}
This paper investigates the information geometrical structure of a determinantal point process (DPP). It demonstrates that a DPP is embedded in the exponential family of log-linear models. The extent of deviation from an exponential family is analyzed using the $\mathrm{e}$-embedding curvature tensor, which identifies partially flat parameters of a DPP.
On the basis of this embedding structure, the duality related to a marginal kernel and an $L$-ensemble kernel is discovered.
\end{abstract}
\end{frontmatter}

\section{Introduction}
\label{sec:introduction}

Determinantal point processes (DPP) on a finite set are a common choice for modeling probabilities that accommodate repulsive property \citep{KuleszaandTasker_2012}.
They have found wide application such as document summarization \citep{Lin_Bilmes_2012_UAI}, recommendation systems \citep{Gartrell_etal_2016_RecSys}, and Bayesian statistics \citep{Kojima_Komaki_2016_SS,Bardenet_Hardy_2020_AAP,RockovaandGeorge_2022}, being blessed with efficient algorithms for inferential tasks \citep{Gillenwater_etal_2014,Affandi_etal_2014,Mariet_Sra_2016_NIPS,Urschel_etal_2017,dupuy2018,Kawashima_Hino_2023_TMLR}.

In this paper, we elucidate the information geometric structure of a DPP model. Specifically, we clarify the embedding structure of a DPP model
in the exponential family of log-linear models (c.f.,~\citealp{Agresti1990,Amari2001}) in Theorem \ref{thm:DPPiscurvedexp}.
Models embedded in exponential families are called curved exponential families.
Information geometry \citep{amari1985differential} provides a measure, the $\mathrm{e}$-embedding curvature tensor \citep{Efron_1975_AoS,Reeds_1975,Amari_1982_AoS,Sei_2011}, to quantify the extent to which a curved exponential family deviates from an exponential family. 
To check the $\mathrm{e}$-embedding curvature as well as the Fisher information matrix,
we apply the diagonal scaling \citep{Marshall_Olkin_1968_NM}, also known as the quality vs.~diversity decomposition in the DPP literature \citep{KuleszaandTasker_2012}, to an $L$-ensemble kernel of a DPP model and then
evaluate them, which clarifies that the subset of parameters related to the item-wise effects (quality terms)
has zero $\mathrm{e}$-embedding curvature (Corollary \ref{cor: e-curvature}).
This partially $\mathrm{e}$-embedding flat structure endows the Fisher information matrix with a nice representation using the Hessian of a certain function (Corollary \ref{cor: Fisher w.r.t. u}).
This also presents a duality related to two representations of a DPP (Lemma \ref{lemma: expectation parameter}), a marginal kernel representation and $L$-ensemble kernel representation.

As a related work,
\cite{Brunel_etal_2017_colt} investigate the qualitative structure of log-likelihood of a DPP, which enables us to check the identifiability and evaluate the null space of the Fisher information matrix. \cite{Grigorescu_etal_2022} investigate the maximization of the log-likelihood from the computational viewpoint, and \cite{Friedman_etal_2024} further analyze the number of critical points of the log-likelihood using algebraic geometry.
Our analyses complement \cite{Brunel_etal_2017_colt} 
from yet another perspective, the embedding structure of a DPP.
Also, \cite{Batmanghelich2014-fm} utilize the idea that DPP has an exponential form
to variational inference. As \cite{Batmanghelich2014-fm} mention, DPP does not form an exponential family in general.
Our results measure the extent to which a DPP deviates from an exponential family and identify the sub structure that can be regarded as an exponential family.

The rest of the paper is organized in the followings.
Section \ref{sec: preliminaries} summarizes DPPs and information geometry.
Section \ref{sec: IG} presents the main results, and Section \ref{sec: examples} examines the main results through examples.
Section \ref{sec: proofs} gives the proofs of the results.

\section{Preliminaries}
\label{sec: preliminaries}

Before presenting the main results,
we summarize preliminaries used in this paper.

\subsection{Determinantal point process}

Let $\mathcal{Y}:=\{1,\ldots,m\}$ be a finite set of size $m$.
A determinantal point process (DPP) on $\mathcal{Y}$ is a random subset $Y$ of $\mathcal{Y}$ of which distribution is given by
\begin{align*}
P(A\subseteq Y)=\mathrm{det}(K_{A}) \quad ( A\subseteq \mathcal{Y})
\end{align*}
for some $m\times m$ symmetric real matrix $K$ with all eigenvalues in $[0,1]$,
where $K_{B}$ for a subset $B\subseteq \mathcal{Y}$ denotes the submatrix of $K$ with indices of columns and rows lying in $B$.
This matrix $K$ is called a marginal kernel.
The condition that all eigenvalues of $K$ lie in $[0,1]$ is necessary to ensure that a DPP forms a probability distribution (see Section 2 of \cite{KuleszaandTasker_2012} as well as Section 2 of \cite{Kawashima_Hino_2023_TMLR}).
If all eigenvalues of $K$ lie in $(0,1)$, the probability mass function of a DPP with a marginal kernel $K$ has a useful representation \citep{borodin2005}
\begin{align*}
P_L(Y=A)=\cfrac{\det(L_{A})}{\det(L+I_{m \times m})}
\quad (A\subseteq \mathcal{Y})
\quad\text{with}\,\, L:=K(I_{m \times m}-K)^{-1}.
\end{align*}
Here, $I_{m\times m}$ denotes the identity matrix of size $m$. DPP having a marginal kernel with all eigenvalues in $(0,1)$ allows this representation and is called $L$-ensemble.
The matrix $L$ is called an $L$-ensemble kernel.
We consider $L$-ensembles
and consider the parametric model
\[
\mathcal{P}_{\mathrm{DPP}}:=
\left\{
P_{L}(Y=A)=\frac{\mathrm{det}(L_{A})}{\mathrm{det}(L+I_{m\times m})}\,:\,\text{$L$ is a positive definite matrix}
\right\}.
\]

\subsection{Exponential family and curved exponential family}

To introduce our theoretical results,
we briefly review exponential families and curved exponential families (on $\mathcal{Y}$).

An exponential family on $(\mathcal{Y}\,,\,2^{\mathcal{Y}})$ is a family $\{P_{\theta}(\cdot) \,:\,\theta = (\theta^{1},\ldots,\theta^{i},\ldots,\theta^{d})^{\top}\in\Theta \subseteq \mathbb{R}^{d}\}$ of probability functions parameterized by $d$-dimensional parameter $\theta$ that are given by
\[
P_{\theta}(A)=\exp\left(
\sum_{i=1}^{d}\theta^{i}T_{i}(A)
-\phi(\theta)
\right) \quad (A \subseteq \mathcal{Y}),
\]
where $T_{i}(A)\, (i=1,\dots, d)$ are a set of measurable functions called sufficient statistics
and $\phi(\theta)$ is a potential function:
\[
\phi(\theta):=\log \sum_{A\subseteq\mathcal{Y}}
\exp\left(
\sum_{i=1}^{d}\theta^{i}T_{i}(A)
\right).
\]
The parameterization $\theta$ is called a canonical parameter.
In an exponential family, the Fisher information matrix $(g_{ij}(\theta))_{1\le i,j\le d}$ with respect to $\theta$ is given by
\begin{align}
g_{ij}(\theta) = 
\frac{\partial^{2}}{\partial \theta^{i}\partial \theta^{j}}\phi(\theta)
=\mathrm{Cov}_{\theta}[T_{i}(A),T_{j}(A)],
\label{eq: Fisher information of exp}
\end{align}
where $\mathrm{Cov}_{\theta}[\cdot\,,\,\cdot]$ is the covariance with respect to $P_{\theta}$.
From the Legendre duality by the convexity of $\phi(\theta)$, 
we can define the dual parameterization
$\eta=(\eta_{i})_{i=1,\ldots,d}$ of $\theta$ called the expectation parameter by
$\eta_{i}=\partial_{i}\phi(\theta)$.
This parameterization is actually the expectation of the sufficient statistics with respect to $P_{\theta}$:
\begin{align}
\eta_{i}=\mathrm{E}_{\theta}[T_{i}(A)],
\label{eq: expectation parameter}
\end{align}
where $\mathrm{E}_{\theta}[\cdot]$ is the expectation with respect to $P_{\theta}$.
The Fisher information matrix $(g^{ij}(\eta))_{1\le i,j\le d}$ with respect to $\eta$ is given by
\begin{align*}
g^{ij}(\eta)=(g^{-1}(\theta(\eta)))_{ij},
\end{align*}
that is, the $(i,j)$-component of the inverse of the Fisher information matrix $g(\theta)$ with respect to $\theta$.

These two parameterizations $\theta$ and $\eta$ play important roles in geometric structure of statistical models~\citep{amari1985differential}.
For a parametric model $\{P_{\xi}(\cdot):\xi=(\xi^{1},\ldots,\xi^{d})^{\top}\in\Xi\subset\mathbb{R}^{d}\}$,
a parameterization $\xi$ is called $\mathrm{e}$-affine if it vanishes $\mathrm{e}$-connection coefficients $\eGamma{}{}(\xi)$ defined by
\begin{align*}
\eGamma{ijk}{}(\xi) = \sum_{A\subseteq \mathcal{Y}}
P_{\xi}(A)
\left[ 
\frac{\partial^{2}}{ \partial \xi^{i} \partial \xi^{j}}
\log P_{\xi}(A)
\frac{\partial}{\partial \xi^{k}}
\log P_{\xi}(A) \right].
\end{align*}
In the same model, a parameterization $\nu=\nu(\xi)$ is called $\mathrm{m}$-affine if it vanishes $\mathrm{m}$-connection coefficients $\mGamma{}{}(\nu)$ defined by
\begin{align*}
\mGamma{ijk}{}(\nu) = \sum_{A\subseteq\mathcal{Y}} \frac{1}{P_{\xi(\nu)}(A)}
\left[
\frac{\partial^{2}}{\partial \nu^{i}\partial \nu^{j}}
 P_{\xi(\nu)}(A)
 \frac{\partial}{\partial \nu^{k}}
 P_{\xi(\nu)}(A)
 \right].
\end{align*}
In the exponential family, the canonical parameter $\theta$ is $\mathrm{e}$-affine and 
the expectation parameter $\eta$ is $\mathrm{m}$-affine.

A curved exponential family on $(\mathcal{Y},2^{\mathcal{Y}})$ is a family
$\{P_{u}(\cdot)\,:\,u=(u^{1},\ldots,u^{a},\ldots,u^{d'})^{\top}\in\mathcal{U}\subseteq\mathbb{R}^{d'}\}$ of probability functions parameterized by $d'$-dimensional parameter that is smoothly embedded in an exponential family. Here, the embedding of a model is defined through the embedding of the parameter. Consider a curved exponential family $\{P_{u}(\cdot)\,:\,u\in\mathcal{U}\}$ that is embedded in a $d$-dimensional exponential family $\{P_{\theta}(\cdot)\,:\,\theta\in\Theta\}$ with a canonical parameterization $\theta=(\theta^{1},\cdots,\theta^{i},\cdots,\theta^{d})^{\top}$ and an expectation parameterization $\eta=(\eta_{1},\cdots,\eta_{i},\cdots,\eta_{d})^{\top}$. Then, $\theta$ and $\eta$ are smooth functions of $u$: $\theta=\theta(u)$ and $\eta=\eta(u)$. In this case, 
the Jacobian matrices of $\theta$ and $\eta$ with respect to $u$
\begin{align*}
B^{i}_{a}(u)=\frac{\partial \theta^{i}}{\partial u^{a}}\Big{|}_{\theta=\theta(u)}
\quad\text{and}\quad
B_{ia}(u)=\frac{\partial \eta_{i}}{\partial u^{a}}\Big{|}_{\eta=\eta(u)}
\end{align*}
are important tools to describe geometric quantities.
The Fisher information matrix $(g_{ab}(u))_{1\le a,b\le d'}$ 
with respect to $u$ is expressed as
\begin{align}
    g_{ab}(u)=\sum_{i=1}^{d}\sum_{j=1}^{d} B^{i}_{a}B^{j}_{b}g_{ij}(\theta(u))=
    \sum_{i=1}^{d}\sum_{j=1}^{d} B_{ia}B_{jb}g^{ij}(\theta(u))
    \label{eq: Fisher w.r.t.u}
\end{align}
and the $\mathrm{e}/\mathrm{m}$-connection coefficients $\eGamma{}{}(u)$ and $\mGamma{}{}(u)$ are given by
\begin{align*}
\eGamma{abc}{}(u)=
\sum_{i=1}^{d}\sum_{j=1}^{d}
\frac{\partial B^{i}_{b}}{\partial u^{a}}
B^{j}_{c}g_{ij}(\theta(u))
\quad\text{and}\quad
\mGamma{abc}{}(u)=
\sum_{i=1}^{d}\sum_{j=1}^{d}
\frac{\partial B_{ib}}{\partial u^{a}}
B_{jc}g^{ij}(\eta(u)).
\end{align*}
The embedding structure also yield the $\mathrm{e}$-embedding curvature tensor ($\mathrm{e}$-Euler--Schouten embedding curvature).
To define this, we introduce an  ancillary parameter $v=(v^{d'+1},\ldots,v^{\kappa},\ldots, v^{d})$ to each point $\theta(u)$ so that the pair $(u,v)$ forms a parameterization of $\{P_{\theta}(\cdot):\theta\in\Theta\}$ with $(u,0)$ implying $\theta(u)$,
and 
the $(u,v)$-components of 
the Fisher information matrix $g(u,v)$ with respect to $(u,v)$ are zero. 
Using an ancillary parameter, we define the $\mathrm{e}$-embedding curvature $\eH{ab\kappa}{}$ ($1\le a,b\le d'$ and $d'+1\le \kappa \le d$) as
\begin{align}
\eH{ab\kappa}{}(u)=
\sum_{A\subseteq \mathcal{Y}}
P_{\xi}(A)
\left[ 
\frac{\partial^{2}}{ \partial u^{a} \partial u^{b}}
\log P_{(u,0)}(A)
\frac{\partial}{\partial v^{\kappa}}
\log P_{(u,0)}(A) \right]
=
\sum_{i=1}^{d}
\frac{\partial B^{i}_{b}}{\partial u^{a}}
\frac{\partial \eta_{i}}{\partial v^{\kappa}}\Big{|}_{v=0}.
\label{eq: e-curvature}
\end{align}
The $d'\times d'$-matrix 
\[
[\eH{}{}]^{2}_{ab}(u):=
\sum_{\kappa  = d'+1}^{d}
\sum_{\lambda = d'+1}^{d}
\sum_{c = 1}^{d'}
\sum_{d = 1}^{d'}
\eH{ac\kappa}{}(u)\eH{bd\lambda}{}(u)g^{\kappa\lambda}(u,0)g^{cd}(u,0)
\]
represents the square of $\mathrm{e}$-embedding curvature \citep{Efron_1975_AoS},
where $g^{\kappa\lambda}$ and $g^{cd}$ are the inverses of $g_{\kappa\lambda}$ and $g_{cd}$, respectively.
The $\mathrm{e}$-embedding curvature quantifies the extent to which the model deviates from an exponential family.
In fact, the following holds.
\begin{lemma}[\cite{Amari_Nagaoka} and \cite{Sei_2011}]
    Under suitable conditions, 
    the square of $\mathrm{e}$-embedding curvature vanishes if and only if the model is an exponential family.
\end{lemma}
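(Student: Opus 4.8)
The plan is to reduce the stated equivalence to a standard fact about flat geometry: inside the $\mathrm{e}$-flat ambient exponential family, a submanifold has vanishing $\mathrm{e}$-embedding curvature exactly when it is $\mathrm{e}$-autoparallel, and $\mathrm{e}$-autoparallel submanifolds are precisely the affine subspaces in the canonical coordinate $\theta$, which in turn are precisely the exponential subfamilies. Before anything geometric, I would first pass from the \emph{square} $[\eH{}{}]^{2}$ to the tensor $\eH{ab\kappa}{}$ itself. Since the ancillary parameter is chosen so that the mixed Fisher block vanishes, the matrices $g^{\kappa\lambda}(u,0)$ and $g^{cd}(u,0)$ are inverses of positive-definite blocks and hence are themselves positive definite. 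Fixing $a$ and writing $X_{c\kappa}:=\eH{ac\kappa}{}(u)$, the diagonal entry $[\eH{}{}]^{2}_{aa}(u)=\sum_{c,d,\kappa,\lambda}X_{c\kappa}X_{d\lambda}g^{cd}g^{\kappa\lambda}$ is the squared norm of $X$ in the positive-definite tensor-product metric $g^{cd}\otimes g^{\kappa\lambda}$. Therefore $[\eH{}{}]^{2}$ is the zero matrix if and only if every $\eH{ab\kappa}{}(u)$ vanishes, so it suffices to characterize vanishing of the full curvature tensor.

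Next I would read off the geometric meaning of $\eH{ab\kappa}{}=0$. In \eqref{eq: e-curvature} the factor $\partial B^{i}_{b}/\partial u^{a}=\partial^{2}\theta^{i}/\partial u^{a}\partial u^{b}$ is the Hessian of the embedding expressed in the $\mathrm{e}$-affine coordinate $\theta$, while $\partial\eta_{i}/\partial v^{\kappa}|_{v=0}$ spans the $v$-directions, which by construction are Fisher-orthogonal to the tangent directions $B_{a}$. Because $\theta$ and $\eta$ are dual coordinates, the contraction $\sum_{i}(\partial^{2}\theta^{i}/\partial u^{a}\partial u^{b})(\partial\eta_{i}/\partial v^{\kappa})$ is exactly the Fisher inner product of the Hessian vector with the $\kappa$-th normal direction. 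Hence $\eH{ab\kappa}{}(u)=0$ for all $\kappa$ says that $\partial^{2}\theta/\partial u^{a}\partial u^{b}$ has no normal component, i.e.\ it is tangent to $\theta(\mathcal{U})$ for all $a,b$. Since the ambient $\mathrm{e}$-connection coefficients vanish in the $\theta$-chart, this is precisely the statement that $\theta(\mathcal{U})$ is $\mathrm{e}$-autoparallel.

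It then remains to prove the two implications. For the ``if'' direction, if the family $\{P_{u}\}$ is an exponential family, then by uniqueness of the canonical form its image in the ambient family is an affine subspace of $\Theta$; choosing coordinates in which $\theta(u)$ is affine gives $\partial^{2}\theta/\partial u^{a}\partial u^{b}\equiv 0$, so the curvature vanishes, and invariance of $[\eH{}{}]^{2}$ under reparameterization of $u$ yields the conclusion in any chart. For the ``only if'' direction, vanishing curvature makes $\theta(\mathcal{U})$ autoparallel in the flat ambient space; integrating the resulting second-order relation $\partial^{2}_{ab}\theta=\sum_{c}\Lambda^{c}_{ab}\partial_{c}\theta$ (a linear system whose integrability is guaranteed by flatness of the $\mathrm{e}$-connection) shows $\theta(\mathcal{U})$ is an open piece of an affine subspace $\{\theta_{0}+\sum_{a}u^{a}c_{a}\}$. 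Substituting this into the exponential-family density exhibits $\{P_{u}\}$ as an exponential family with sufficient statistics $\tilde T_{a}=\sum_{i}c^{i}_{a}T_{i}$ and modified base measure $\exp(\sum_{i}\theta_{0}^{i}T_{i})$, which closes the equivalence.

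I expect the main obstacle to be the global integration step in the ``only if'' direction: passing from the pointwise, infinitesimal condition that the second fundamental form is zero to the global conclusion that $\theta(\mathcal{U})$ is a genuine affine subspace. This is where the ``suitable conditions'' enter---connectedness of $\mathcal{U}$, the embedding being a full-rank immersion so that the tangential projection is well defined, and the availability of a single global $\mathrm{e}$-affine chart $\theta$---and where one must invoke the flatness (vanishing curvature and torsion) of the $\mathrm{e}$-connection of the ambient exponential family to guarantee that autoparallel curves are straight lines in $\theta$.
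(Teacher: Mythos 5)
The paper does not actually prove this lemma; it is quoted as a known result from \cite{Amari_Nagaoka} and \cite{Sei_2011}, and Section \ref{sec: proofs} supplies proofs only for Theorem \ref{thm:DPPiscurvedexp} and Corollaries \ref{cor: e-curvature} and \ref{cor: Fisher w.r.t. u}. So there is no in-paper argument to compare against; what you have written is, in substance, the standard textbook proof, and it is correct as a sketch. Your reduction from $[\eH{}{}]^{2}=0$ to $\eH{ab\kappa}{}=0$ is sound: the tensor product of the positive-definite blocks $g^{cd}$ and $g^{\kappa\lambda}$ is positive definite, so each diagonal entry of $[\eH{}{}]^{2}$ is a genuine squared norm of the slice $(\eH{ac\kappa}{})_{c,\kappa}$. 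The identification of $\sum_{i}(\partial^{2}\theta^{i}/\partial u^{a}\partial u^{b})(\partial\eta_{i}/\partial v^{\kappa})$ with the Fisher pairing of the $\theta$-chart Hessian against the normal directions is also right, using that $\eta$ is the metrically lowered dual of $\theta$ and that the $\mathrm{e}$-connection coefficients vanish in the $\theta$-chart. Two spots are thinner than the rest. In the ``if'' direction, ``uniqueness of the canonical form'' silently requires minimality of the ambient family (linear independence of the $T_{\mathcal{I}}$ together with the constant function); without it an abstract exponential subfamily need not sit affinely in $\Theta$. In the ``only if'' direction, the passage from a vanishing second fundamental form to a global affine image is the autoparallel-implies-totally-geodesic argument, which needs connectedness of $\mathcal{U}$, constant rank of the immersion, and a single global affine chart --- you correctly flag these as the ``suitable conditions,'' which matches how the cited sources state the theorem. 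I see no gap that would invalidate the argument.
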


Although
the $\mathrm{m}$-embedding curvature is defined in the same way, we do not discuss it in this paper and omit it.

\section{Information-geometric structure of determinantal point processes}
\label{sec: IG}

This section elucidates the embedding structure of DPP. This connects DPP to the log-linear model (c.f.,~\citealp{Agresti1990,Amari2001}) and endows the DPP model with interesting properties (Corollaries \ref{cor: e-curvature} and \ref{cor: Fisher w.r.t. u}) and the duality behind a marginal kernel $K$ and an $L$-ensemble kernel $L$ (Lemma \ref{lemma: expectation parameter}).

\subsection{Embedding structure of determinantal point processes}
To present the results, we start with employing the diagonal scaling of a positive definite kernel $L$ \citep{Marshall_Olkin_1968_NM}, also known as quality vs.~diversity decomposition \citep{KuleszaandTasker_2012}.
Since $L$ is positive definite,
we parameterize $L$ as $L=DRD$ with the diagonal matrix $D$ and the symmetric matrix $R$, where
\begin{align*}
D:=
\begin{pmatrix}
\sqrt{L_{11}} & 0 & \cdots & 0\\
0 & \sqrt{L_{22}}  & \cdots & 0\\
\vdots    & \vdots    & \vdots & \vdots \\
0 & 0 & \cdots & 0\\
0 & 0 & \cdots & \sqrt{L_{mm}} \\
\end{pmatrix}
\,\text{and}\,
R:=
\begin{pmatrix}
1         & \rho_{12} & \cdots & \rho_{1m}\\
\rho_{12} & 1         & \cdots & \rho_{2m}\\
\vdots    & \vdots    & \vdots & \vdots \\
\rho_{1(m-1)} & \rho_{2(m-1)}         & \cdots & \rho_{(m-1)m}\\
\rho_{1m}         & \rho_{2m}         & \cdots & 1 \\
\end{pmatrix}
\end{align*}
with $\rho_{ij}:=L_{ij}/\sqrt{L_{ii}L_{jj}}$.

The following theorem gives us an embedding structure of DPP as a curved exponential family. 
The exponential family in which DPP is embedded is a log-linear model.

\begin{theorem}
\label{thm:DPPiscurvedexp}
(a) The DPP model $\mathcal{P}_{\mathrm{DPP}}$ is written as the model 
$\mathcal{P}:=\{P_{u}(A)\,:\,u\in\mathbb{R}^{m}\times (-\infty,0)^{m(m-1)/2}\}$ parameterized by an $m(m+1)/2$-dimensional parameter 
\[
u=\left(u^{\{1\}},\ldots,u^{\{m\}},u^{\{1,2\}},
u^{\{1,3\}},
\ldots,u^{\{(m-1),m\}}\right)
\]
with the indices sorted in a lexicographical order
of which member has the form 
\begin{align}
\label{eq:cExpDPP}
P_{u}(A)
&=\exp\left(\sum_{\alpha_{1}\in\mathcal{S}_{1}^{m}} u^{\alpha_{1}}T_{\alpha_{1}}(A)+
\sum_{ \alpha_{2}\in\mathcal{S}_{2}^{m}}u^{\alpha_{2}}T_{\alpha_{2}}(A)
+\sum_{k=3}^{m}\sum_{\mathcal{I}_{k}\in\mathcal{S}_{k}^{m}}
\theta^{\mathcal{I}_{k}}(u)T_{\mathcal{I}_{k}}(A)
-\psi(u)\right)
\end{align}
where 
\begin{align*}
\mathcal{S}_{k}^{m}
&:=
\left\{\{i_{1},\ldots,i_{k}\}\,:\,
1\le i_{1}<\ldots i_{k}\le m\right\}
&(1\le k \le m),
\\
T_{\mathcal{I}_{k}}(A)&:=1_{i_{1}\in A\,,\,\ldots,i_{k}\in  A}
\quad\text{with}\quad
\mathcal{I}_{k}=\{i_{1},\ldots,i_{k}\}
&(\mathcal{I}_{k}\in\mathcal{S}_{k}^{m}\,\,\text{and}\,\,1\le k \le m),\\
u^{\alpha_{1}}&:=\log  L_{\alpha_{1}} &(\alpha_{1}\in\mathcal{S}_{1}^{m}),\\
u^{\alpha_{2}}&:=\log (1-\rho_{\alpha_{2}}^{2}) &(\alpha_{2}\in\mathcal{S}_{2}^{m}),\\
\psi(u)&:=\sum_{\alpha_{1}\in\mathcal{S}_{1}^{m}}u^{\alpha_{1}}+
\log\mathrm{det}\begin{pmatrix}
e^{-u^{\{1\}}}+1 & \cdots & \sqrt{1-e^{u^{\{1,m\}}}} \\
\vdots       & \vdots & \vdots \\
\sqrt{1-e^{u^{\{1,m\}}}} & \cdots & e^{-u^{\{m\}}}+1
\end{pmatrix}
,
\end{align*}
and
$\theta^{\mathcal{I}_{k}}(u)$ ($\mathcal{I}_{k}=(i_{1},\ldots,i_{k})\in\mathcal{S}_{k}^{m}$
 and $3\le k \le m$)
is defined recursively as
\begin{equation}
\begin{split}
\theta^{\mathcal{I}_{k}}(u)&:
=
\log \mathrm{det}
\left(I_{k\times k}+
\left(
1_{a \ne b}
\sqrt{1-e^{u^{\{a,b\}}}}\right)_{a,b=i_{1},\ldots,i_{k}}\right)
\\
&\quad\quad -
\sum_{l=3}^{k-1}\sum_{
\mathcal{J}_{l}\in\mathcal{S}_{l}^{m}
\,:\,
\mathcal{J}_{l}\subset \mathcal{I}_{k}
}\theta^{\mathcal{J}_{l}}(u)
-\sum_{
\alpha_{2}\in\mathcal{S}_{2}^{m}
\,:\,
\alpha_{2}\subset \mathcal{I}_{k}}u^{\alpha_{2}}.
\end{split}
\label{eq:recTheta}
\end{equation}

(b) The family $\mathcal{P}$ is the curved exponential family embedded in the exponential family $\mathcal{E}:=\{P_{\theta}(A)\,:\,\theta\in\mathbb{R}^{2^m-1}\}$ given by
\begin{align}
\label{eq:ExpDPP}
P_{\theta}(A)=
\exp\left(
\sum_{k=1}^{m}
\sum_{\mathcal{I}_{k}\in\mathcal{S}_{k}^{m}}
\theta^{\mathcal{I}_{k}}T_{\mathcal{I}_{k}}(A)-\phi(\theta)
\right),
\end{align}
where $\phi(\theta)$ is given by
\begin{align*}
\phi(\theta):= \log \sum_{A \subseteq \mathcal{Y}} \exp \left(
\sum_{k=1}^{m}
\sum_{\mathcal{I}_{k}\in\mathcal{S}_{k}^{m}}
\theta^{\mathcal{I}_{k}}T_{\mathcal{I}_{k}}(A) \right).
\end{align*}
\end{theorem}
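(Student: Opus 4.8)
The plan is to work entirely at the level of $\log P_L(Y=A)$ and to expand it in the basis of indicator statistics $T_{\mathcal{I}}(A)=1_{\mathcal{I}\subseteq A}$. Since $P_L(Y=A)=\det(L_A)/\det(L+I_{m\times m})$, the normalizer $\log\det(L+I_{m\times m})$ is constant in $A$, so the whole task reduces to writing the set-function $A\mapsto\log\det(L_A)$ as a linear combination of the $T_{\mathcal{I}}$. The first key step is the diagonal scaling $L=DRD$: because $L_A=D_AR_AD_A$ with $D_A$ diagonal, one gets $\det(L_A)=\big(\prod_{i\in A}L_{ii}\big)\det(R_A)$ and hence the additive splitting
\[
\log\det(L_A)=\sum_{i\in A}\log L_{ii}+\log\det(R_A).
\]
The first sum is already in the desired form: it equals $\sum_{\alpha_1\in\mathcal{S}_1^m}u^{\alpha_1}T_{\alpha_1}(A)$ with $u^{\{i\}}=\log L_{ii}$, producing the quality (item-wise) linear term of \eqref{eq:cExpDPP}.

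It remains to expand the diversity part $A\mapsto\log\det(R_A)$. Here I would use that $\{T_{\mathcal{I}}=1_{\mathcal{I}\subseteq A}:\mathcal{I}\subseteq\mathcal{Y}\}$ is a basis of the space $\mathbb{R}^{2^{\mathcal{Y}}}$ of set-functions (the zeta basis of the Boolean lattice), so $\log\det(R_A)=\sum_{\mathcal{I}\subseteq\mathcal{Y}}c_{\mathcal{I}}T_{\mathcal{I}}(A)$ for unique coefficients $c_{\mathcal{I}}$ characterized by the lattice identity $\log\det(R_{\mathcal{I}})=\sum_{\mathcal{J}\subseteq\mathcal{I}}c_{\mathcal{J}}$. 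Evaluating on small subsets gives $c_\emptyset=c_{\{i\}}=0$ (since $\det R_{\mathcal{J}}=1$ for $|\mathcal{J}|\le 1$) and $c_{\{i,j\}}=\log\det R_{\{i,j\}}=\log(1-\rho_{ij}^2)=u^{\{i,j\}}$, which is precisely the second (diversity) linear term. For $|\mathcal{I}_k|=k\ge 3$ I would solve the same lattice identity for the top coefficient,
\[
c_{\mathcal{I}_k}=\log\det(R_{\mathcal{I}_k})-\sum_{\alpha_2\subset\mathcal{I}_k}u^{\alpha_2}-\sum_{l=3}^{k-1}\sum_{\mathcal{J}_l\subset\mathcal{I}_k}c_{\mathcal{J}_l},
\]
and set $\theta^{\mathcal{I}_k}(u):=c_{\mathcal{I}_k}$, which is exactly \eqref{eq:recTheta} once $\det(R_{\mathcal{I}_k})$ is rewritten through the coordinates $u^{\{a,b\}}=\log(1-\rho_{ab}^2)$ as the displayed $k\times k$ determinant. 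The same diagonal scaling handles the potential: factoring $L+I_{m\times m}=D(R+D^{-2})D$ gives $\log\det(L+I_{m\times m})=\sum_i\log L_{ii}+\log\det(R+D^{-2})$, matching the stated $\psi(u)$ with $D^{-2}=\mathrm{diag}(e^{-u^{\{i\}}})$; this proves part (a).

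For part (b) I would argue that $\mathcal{E}$ is the saturated log-linear model on $(\mathcal{Y},2^{\mathcal{Y}})$: the $2^m-1$ statistics $\{T_{\mathcal{I}}:\emptyset\ne\mathcal{I}\subseteq\mathcal{Y}\}$ together with the constant form the zeta basis, hence are affinely independent, so $\mathcal{E}$ is a minimal exponential family of dimension $2^m-1$ and \eqref{eq: Fisher information of exp} applies. The coordinate maps $u\mapsto\theta^{\mathcal{I}}(u)$ are smooth (compositions of $\log\det$ with polynomials in the $u^{\alpha}$), and the Jacobian $B^i_a=\partial\theta^i/\partial u^a$ has full column rank $m(m+1)/2$ because the first $m(m+1)/2$ image coordinates $\theta^{\{i\}},\theta^{\{i,j\}}$ coincide with $u$ itself; thus $u\mapsto\theta(u)$ is a smooth embedding and $\mathcal{P}$ is a curved exponential family in the sense defined above. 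Finally, $P_{\theta(u)}=P_u$ because their linear parts agree by construction and their normalizers coincide: $\phi(\theta(u))=\log\sum_{A\subseteq\mathcal{Y}}\det(L_A)=\log\det(L+I_{m\times m})=\psi(u)$, where I invoke the standard $L$-ensemble normalization identity $\sum_{A\subseteq\mathcal{Y}}\det(L_A)=\det(L+I_{m\times m})$.

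The $2\times2$ and potential computations are routine; the main obstacle is the combinatorial verification that the Möbius coefficient $c_{\mathcal{I}_k}$ equals the recursive expression \eqref{eq:recTheta} uniformly in $k$. I would discharge this by induction on $k$: assuming $\theta^{\mathcal{J}_l}=c_{\mathcal{J}_l}$ for all proper subsets of sizes $3\le l\le k-1$, the top lattice identity for $\mathcal{I}_k$ collapses to exactly the three displayed groups of terms, so the recursion reproduces $c_{\mathcal{I}_k}$. Care is needed to track that the only nonzero lower-order contributions are the order-$2$ terms $u^{\alpha_2}$ and the order-$3$-through-$(k-1)$ terms, since orders $0$ and $1$ vanish, which is precisely why the sum in \eqref{eq:recTheta} starts at $l=3$.
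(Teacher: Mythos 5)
Your proposal is correct and rests on the same two pillars as the paper's proof: the diagonal scaling $L=DRD$, which splits $\log\det(L_A)$ into the item-wise part $\sum_{i\in A}\log L_{ii}$ and the diversity part $\log\det(R_A)$, and the recursive peeling of lower-order coefficients that yields \eqref{eq:recTheta}. Where you differ is in how the recursion is justified. The paper verifies the formula $P_u(A)$ directly by a case analysis on $|A|$ (with the general case $|A|\ge 4$ dispatched as ``the same procedure as in Case 3''), whereas you invoke the zeta/M\"obius expansion of the set function $A\mapsto\log\det(R_A)$ on the Boolean lattice, identify the coefficients $c_{\mathcal{I}}$ by the lattice identity $\log\det(R_{\mathcal{I}})=\sum_{\mathcal{J}\subseteq\mathcal{I}}c_{\mathcal{J}}$, and show by induction that the top coefficient coincides with $\theta^{\mathcal{I}_k}(u)$. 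This buys you a uniform-in-$k$ argument that makes the paper's terse Case 4 fully rigorous, and it gives existence and uniqueness of the expansion for free. You also supply substance for part (b) — affine independence of the statistics $T_{\mathcal{I}}$, smoothness and full column rank of the Jacobian (which indeed follows from the identity block in the first $m(m+1)/2$ coordinates), and the matching of normalizers via $\sum_{A}\det(L_A)=\det(L+I_{m\times m})$ — where the paper simply declares (b) obvious. One caveat you inherit from the paper rather than introduce: rewriting $\det(R_{\mathcal{I}_k})$ through $u^{\{a,b\}}=\log(1-\rho_{ab}^{2})$ replaces $\rho_{ab}$ by $\sqrt{1-e^{u^{\{a,b\}}}}=|\rho_{ab}|$, so for $k\ge 3$ the displayed determinant agrees with $\log\det(R_{\mathcal{I}_k})$ only up to the sign ambiguity $L\mapsto\tilde D L\tilde D$ discussed in the paper's unidentifiability remark; if you wanted to be fully precise you would note that the claimed equality of models holds at the level of equivalence classes of kernels.
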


Note that 
the DPP model forms an exponential family for $m=1,2$ but do not for $m\ge 3$.
The underlying exponential family of the DPP model is nothing but the log-linear model (c.f.,~\citealp{Agresti1990,Amari2001}); more specifically, the log-linear model on a partially ordered set \citep{Sugiyama_Nahakara_Tsuda_2016}.
The proof of this theorem is given in Section \ref{sec: proofs}.

\begin{figure}[htbp]
    \centering
\includegraphics[scale=0.45]{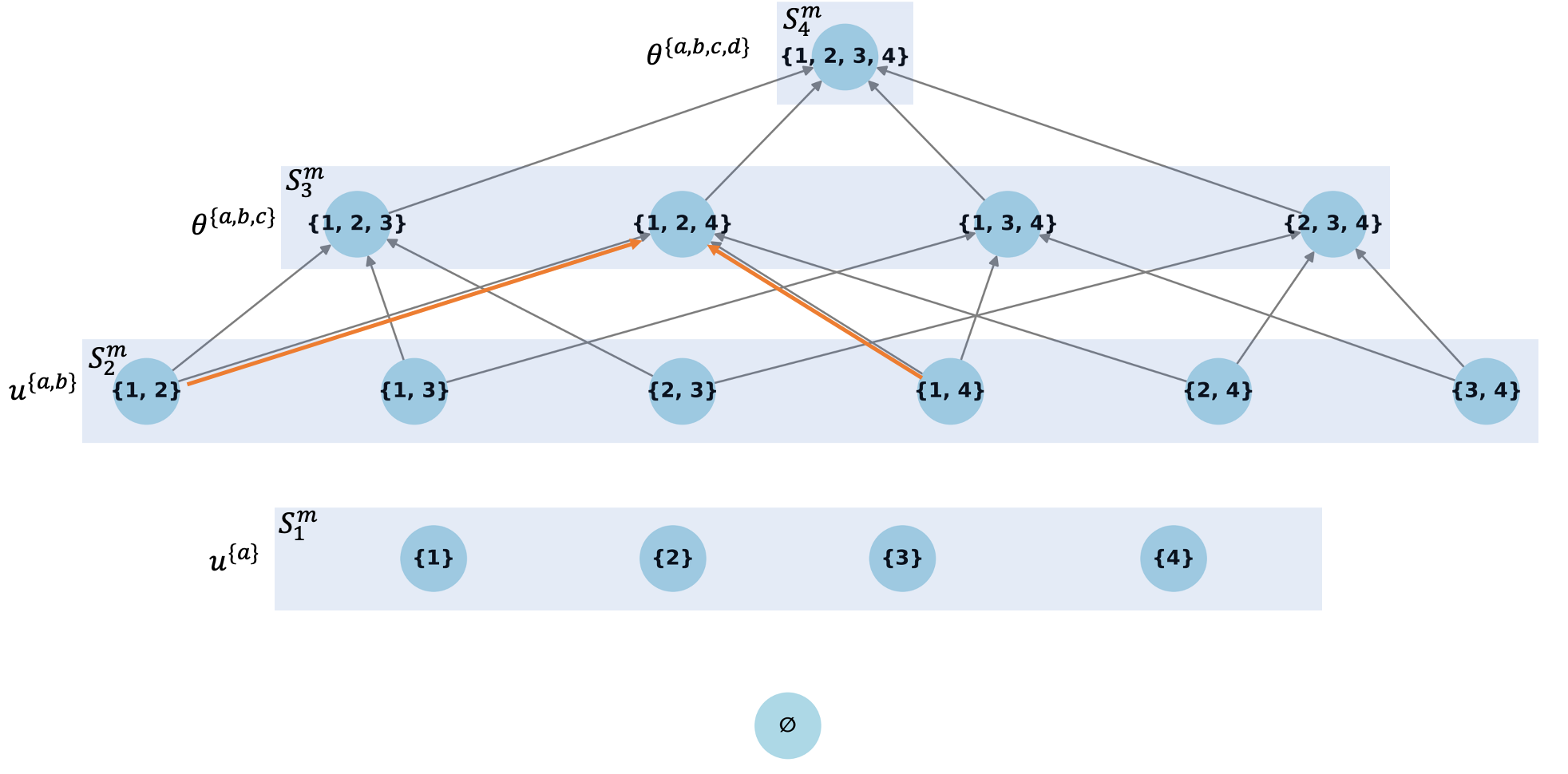}
    \caption{Construction of parameters $(u^{\alpha})_{\alpha\in\mathcal{S}_{1}^{m}\cup\mathcal{S}_{2}^{m}}$ and $(\theta^{\mathcal{I}})_{\mathcal{I}\in\mathcal{S}_{3}^{m}\cup\cdots\cup \mathcal{S}_{m}^{m}}$ in the DPP model with $m=4$. After drawing the Hasse diagram for the power set,
    we eliminate upward paths starting from singletons and the emptyset. In the resulting diagram, nodes correspond to the indices for parameters, and directed edges represent the dependence of parameters. 
    Nodes in the same layer have the same cardinality. 
    For example, $\theta^{\{1,2,4\}}$ is constructed using parameters having directed edges to $\{1,2,4\}$ as indicated in orange edges.
    }
    \label{fig: curvedexponential}
\end{figure}

Figure \ref{fig: curvedexponential} displays the construction of $u$ and $\theta(u)$ using a modification of the Hasse diagram, where the figure is drawn for $m=4$.
We begin with drawing the Hasse diagram of the power set $2^{\mathcal{Y}}$ of an $m$-element item $\mathcal{Y}=\{1,\ldots,m\}$ ordered by inclusion. 
A node of the Hasse diagram corresponds to a subset of the power set, and a directed edge from a subset $A$ to a subset $B$ is drawn when
$A\ne B$, $A\subset B$, and there is no $C$ distinct from $A$ and $B$ with $A\subset C\subset B$. A directed edge in the Hasse diagram is called an upward path.
We then eliminate all upward paths starting from singletons $\{m\}$ ($1\le a \le m$) and the empty set $\emptyset$.
In this modified Hasse diagram, the second and the third layers from the bottom correspond to the indices for the free parameter $u$. The layers from the third to the top correspond to the indices for $\theta(u)$.
The recursive construction (\ref{eq:recTheta}) in Theorem \ref{thm:DPPiscurvedexp} is explained by upward paths in the diagram. Each element $\theta^{\mathcal{I}_{k}}(u)$ 
($\mathcal{I}_{k}\in\mathcal{S}^{m}_{k}$, $3\le k \le m$)
consists of 
\[\log \mathrm{det}(R_{\mathcal{I}_{k}})
=\log \mathrm{det}
\left(I_{k\times k}+
\left(
1_{a \ne b}
\sqrt{1-e^{u^{\{a,b\}}}}\right)_{a,b=i_{1},\ldots,i_{k}}\right)
\]
subtracted by 
all $u^{\alpha_{2}}$
with $\alpha_{2}$ 
in the second layer 
that are connected to $\mathcal{I}_{k}$
via upward paths
and 
all $\theta^{\mathcal{J}_{l}}$
with $\mathcal{J}_{l}$
in the $l$-th layer ($3\le l\le k-1$)
that are connected to $\mathcal{I}_{k}$
via upward paths.
For example, 
in the DPP model with $m=4$, the constrained parameteter
$\theta^{\{1,2,4\}}$ is constructed using 
parameters having upward paths to $\{1,2,3\}$ depicted in the orange edges in Figure \ref{fig: curvedexponential}.
Note that parameters $(u^{\alpha_{1}})_{\alpha_{1}\in\mathcal{S}_{1}^{m}}$ in the second layer, corresponding to the quality terms in the DPP literature, are not involved in the construction, which is a key to the following analysis.

\begin{remark}[Unidentifiability]
    The parameterization using $u$ implies the essential unidentifiability issue of DPPs posed by \cite{Brunel_etal_2017_colt}. In fact, $u^{\{a,b\}}$ only recovers $\pm\rho_{ab}$ ($1\le a<b\le m$), which implies that two DPPs $P_{L}$ and $P_{L'}$ are identical if and only if there exists an $m\times m$ diagonal matrix $\tilde{D}$ with $\pm 1$ diagonal entries such that $L'=\tilde{D}L\tilde{D}$.
\end{remark}

We next investigate a structure of the $\mathrm{e}$-embedding curvature.
Although the $\mathrm{e}$-embedding curvature of the DPP model is not necessarily identically zero,
it along with the direction of $\partial/\partial u^{\alpha_{1}}$ ($\alpha_{1}\in\mathcal{S}_{1}^{m}$) is shown to vanish,
which implies that 
$(u^{\alpha_{1}})_{\alpha_{1}\in\mathcal{S}_{1}^{m}}$ can be regarded as the parameter of an exponential family.
\begin{corollary}
\label{cor: e-curvature}
    The $\mathrm{e}$-embedding curvature
    $\eH{\alpha_{1}\alpha\kappa}{}$ 
    vanishes for any $\alpha_{1}\in\mathcal{S}_{1}^{m}$,
    $\alpha \in \mathcal{S}_{1}^{m} \cup \mathcal{S}_{2}^{m}$,
    $\kappa\in\mathcal{S}_{k}^{m}$ ($k=3,\ldots,m$),
    and thus the square of the $\mathrm{e}$-embedding curvature has the following structure:
    \begin{align*}
    \left([\eH{}{}]^{2}_{\alpha\tilde{\alpha}}\right)_{\alpha,\tilde{\alpha}
    \in \mathcal{S}_{1}^{m}\cup \mathcal{S}_{2}^{m}
    }=
    \begin{pmatrix}
    0_{m \times m} & 0_{m\times m(m-1)/2} \\
    0_{m(m-1)/2 \times m} & *
    \end{pmatrix},
    \end{align*}
    where 
    $0_{a\times b}$ ($a,b\in\mathbb{N}$) is the $a \times b$ zero matrix, and
    $*$ implies an $m(m-1)/2\times m(m-1)/2$-matrix that is possibly non-zero.
    \end{corollary}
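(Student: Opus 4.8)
The plan is to read the required vanishing directly off the explicit embedding map $\theta(u)$ supplied by Theorem \ref{thm:DPPiscurvedexp}, so that the ancillary directions $\partial\eta_i/\partial v^{\kappa}$ never have to be computed. I would work from the second (more explicit) expression in \eqref{eq: e-curvature}, namely
\[
\eH{\alpha_1\alpha\kappa}{}(u)=\sum_{i}\frac{\partial B^{i}_{\alpha}}{\partial u^{\alpha_1}}\frac{\partial\eta_i}{\partial v^{\kappa}}\Big|_{v=0}=\sum_{i}\frac{\partial^{2}\theta^{i}}{\partial u^{\alpha_1}\partial u^{\alpha}}\frac{\partial\eta_i}{\partial v^{\kappa}}\Big|_{v=0},
\]
the sum running over all indices $i$ of the ambient exponential family \eqref{eq:ExpDPP}. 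The whole strategy is to show that the Hessian factor $\partial^{2}\theta^{i}/\partial u^{\alpha_1}\partial u^{\alpha}$ vanishes for every $i$ whenever $\alpha_1\in\mathcal{S}_1^m$; this kills each summand irrespective of the remaining factor.

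The key structural fact, already flagged after Theorem \ref{thm:DPPiscurvedexp}, is that the singleton (quality) parameters $(u^{\alpha_1})_{\alpha_1\in\mathcal{S}_1^m}$ never enter the higher-order coordinates. Comparing \eqref{eq:cExpDPP} with \eqref{eq:ExpDPP} gives the identifications $\theta^{\alpha_1}=u^{\alpha_1}$ and $\theta^{\alpha_2}=u^{\alpha_2}$, so the coordinates indexed by $\mathcal{S}_1^m$ and $\mathcal{S}_2^m$ are linear in $u$ and all their second derivatives vanish. For $k\ge 3$ I would prove by induction on $k$ that $\theta^{\mathcal{I}_k}(u)$ depends only on the pair parameters $(u^{\alpha_2})_{\alpha_2\in\mathcal{S}_2^m}$: in the recursion \eqref{eq:recTheta} the leading term $\log\det(I_{k\times k}+(1_{a\ne b}\sqrt{1-e^{u^{\{a,b\}}}}))$ involves only pair parameters, while the subtracted terms are lower-order $\theta^{\mathcal{J}_l}$ (independent of $(u^{\alpha_1})$ by the inductive hypothesis) together with pair parameters $u^{\alpha_2}$. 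Hence $\partial\theta^{\mathcal{I}_k}/\partial u^{\alpha_1}\equiv 0$ for every $\alpha_1\in\mathcal{S}_1^m$, and in particular the mixed second derivative $\partial^{2}\theta^{\mathcal{I}_k}/\partial u^{\alpha_1}\partial u^{\alpha}=0$.

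Combining the two cases, $\partial^{2}\theta^{i}/\partial u^{\alpha_1}\partial u^{\alpha}=0$ for every ambient index $i$, so each term of the displayed sum vanishes and $\eH{\alpha_1\alpha\kappa}{}(u)=0$ for all $\alpha_1\in\mathcal{S}_1^m$, $\alpha\in\mathcal{S}_1^m\cup\mathcal{S}_2^m$, and $\kappa\in\mathcal{S}_k^m$ with $3\le k\le m$, which is the first assertion. For the block structure I would feed this into the definition of $[\eH{}{}]^{2}$. Since the contraction indices $c,d$ there range over $\mathcal{S}_1^m\cup\mathcal{S}_2^m$, the vanishing just established gives $\eH{\alpha c\kappa}{}\equiv 0$ whenever $\alpha\in\mathcal{S}_1^m$, for every $c$ and $\kappa$. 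Consequently, in each product $\eH{\alpha c\kappa}{}\eH{\tilde\alpha d\lambda}{}$ occurring in $[\eH{}{}]^{2}_{\alpha\tilde\alpha}$ the first factor vanishes if $\alpha\in\mathcal{S}_1^m$ and the second if $\tilde\alpha\in\mathcal{S}_1^m$; hence $[\eH{}{}]^{2}_{\alpha\tilde\alpha}=0$ as soon as either index is a singleton. This wipes out the top-left $m\times m$ block and both off-diagonal blocks, leaving only the $\mathcal{S}_2^m\times\mathcal{S}_2^m$ block possibly nonzero, exactly the claimed pattern. I expect the only genuine content to be the inductive verification that the quality parameters drop out of $\theta^{\mathcal{I}_k}$; once that independence is in hand, the rest is a single line of differentiation and substitution.
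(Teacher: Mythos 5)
Your proposal is correct and follows essentially the same route as the paper: the paper's proof likewise observes that the Jacobian $\partial\theta/\partial u$ is independent of the quality parameters $(u^{\alpha_1})_{\alpha_1\in\mathcal{S}_1^m}$ (i.e., the mixed second derivatives $\partial^2\theta^i/\partial u^{\alpha_1}\partial u^{\alpha}$ all vanish) and concludes via the second expression in (\ref{eq: e-curvature}). Your explicit induction on $k$ for the independence of $\theta^{\mathcal{I}_k}$ from the singleton parameters, and the block-by-block argument for $[\eH{}{}]^2$, simply spell out details the paper leaves implicit.
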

The proof is given in Section \ref{sec: proofs}.

Blessed with the embedding structure indicated by Corollary \ref{cor: e-curvature},
we obtain the following expression for the Fisher information matrix with respect to $u$.
\begin{corollary}
\label{cor: Fisher w.r.t. u}
    (a) For $1\le a,b\le m$,
    the $(\{a\},\{b\})$-component of the Fisher information matrix $\mathcal{G}$ with respect to $u$ is given by
    \begin{align*}    \mathcal{G}_{\{a\}\{b\}}(u)=
    \frac{\partial^{2}}{\partial u^{\{a\}}\partial u^{\{b\}} }\psi(u)=
    \begin{cases}
    K_{aa}(1-K_{aa})
    &
    (a=b),
    \\
    -K_{ab}^{2}
    &
    (a\ne b).
    \end{cases}
    \end{align*}
    (b) For $\alpha_{1}\in\mathcal{S}_{1}^{m}$ and $\alpha_{2}\in\mathcal{S}_{2}^{m}$,
        the $(\alpha_{1},\alpha_{2})$-component of the Fisher information matrix $\mathcal{G}$ with respect to $u$ is given by
    \begin{align*}
    \mathcal{G}_{\alpha_{1}\alpha_{2}}(u)=
    \mathrm{det}(K_{\alpha_{1}\cup\alpha_{2}})
    -
    \mathrm{det}(K_{\alpha_{1}})
    \mathrm{det}(K_{\alpha_{2}})
    +\sum_{\mathcal{B}_{n}\,:\,
    \alpha_{2} \subsetneq \mathcal{B}_{n}\,,\, 2< n \le m}
    \{\mathrm{det}(K_{\alpha_{2}})
    \left(1-
    \mathrm{det}(K_{\mathcal{B}_{n}})
    \right)
    \}\frac{\partial \theta^{\mathcal{B}_{n}}}{\partial u^{\alpha_{2}}}.
    \end{align*}
\end{corollary}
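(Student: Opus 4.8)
The plan is to obtain both parts directly from the pullback identity \eqref{eq: Fisher w.r.t.u}, exploiting the one structural fact that already powers Corollary \ref{cor: e-curvature}: by the recursion \eqref{eq:recTheta} (equivalently, by the Hasse-diagram dependence in Figure \ref{fig: curvedexponential}) no higher coordinate $\theta^{\mathcal{I}_k}(u)$ with $k\ge 2$ involves a quality parameter $u^{\alpha_1}$; only $\theta^{\alpha_1}(u)=u^{\alpha_1}$ does. Hence the Jacobian in a singleton direction is trivial,
\begin{align*}
B^{i}_{\alpha_1}(u)=\frac{\partial\theta^{i}}{\partial u^{\alpha_1}}=\delta^{i}_{\alpha_1}\qquad(\alpha_1\in\mathcal{S}_1^m),
\end{align*}
so \eqref{eq: Fisher w.r.t.u} collapses to $\mathcal{G}_{\alpha_1 b}(u)=\sum_j B^{j}_{b}\,g_{\alpha_1 j}(\theta(u))$. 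Since in addition $\partial^2\theta^{\mathcal{I}_k}/\partial u^{\alpha_1}\partial u^{b}=0$, differentiating $\psi(u)=\phi(\theta(u))$ twice shows that every $\alpha_1$-row of the Fisher matrix is a Hessian row of $\psi$, i.e. $\mathcal{G}_{\alpha_1 b}=\partial^2\psi/\partial u^{\alpha_1}\partial u^{b}$; this is the Hessian form asserted in part (a). Throughout I use that $g_{ij}(\theta)=\mathrm{Cov}_\theta[T_i,T_j]$ by \eqref{eq: Fisher information of exp}, and that the expectation parameters are DPP inclusion probabilities, $\eta_{\mathcal{I}}=P(\mathcal{I}\subseteq Y)=\det(K_{\mathcal{I}})$ by \eqref{eq: expectation parameter}.

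For part (a), the reduction gives $\mathcal{G}_{\{a\}\{b\}}(u)=g_{\{a\}\{b\}}(\theta(u))=\mathrm{Cov}[1_{a\in Y},1_{b\in Y}]$, and the inclusion-probability identity closes the case at once: for $a=b$ it equals $K_{aa}-K_{aa}^2=K_{aa}(1-K_{aa})$, while for $a\ne b$ it equals $\det(K_{\{a,b\}})-K_{aa}K_{bb}=-K_{ab}^{2}$. The Hessian-of-$\psi$ identity above supplies the first displayed equality in part (a).

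For part (b) I would again start from $\mathcal{G}_{\alpha_1\alpha_2}(u)=\sum_j B^{j}_{\alpha_2}\,g_{\alpha_1 j}(\theta(u))$ and record which $j$ contribute: $\theta^{\alpha_2}(u)=u^{\alpha_2}$ gives $B^{\alpha_2}_{\alpha_2}=1$, no other pair or singleton depends on $u^{\alpha_2}$, and by \eqref{eq:recTheta} a higher coordinate $\theta^{\mathcal{B}_n}$ depends on $u^{\alpha_2}$ exactly when $\alpha_2\subset\mathcal{B}_n$. Thus
\begin{align*}
\mathcal{G}_{\alpha_1\alpha_2}(u)=\mathrm{Cov}[T_{\alpha_1},T_{\alpha_2}]+\sum_{\mathcal{B}_n:\,\alpha_2\subsetneq\mathcal{B}_n,\ 2<n\le m}\mathrm{Cov}[T_{\alpha_1},T_{\mathcal{B}_n}]\,\frac{\partial\theta^{\mathcal{B}_n}}{\partial u^{\alpha_2}}.
\end{align*}
The leading covariance is immediate: $\mathrm{Cov}[T_{\alpha_1},T_{\alpha_2}]=\det(K_{\alpha_1\cup\alpha_2})-\det(K_{\alpha_1})\det(K_{\alpha_2})$, matching the first two stated terms. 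Each covariance in the sum evaluates the same way, to $\det(K_{\alpha_1\cup\mathcal{B}_n})-\det(K_{\alpha_1})\det(K_{\mathcal{B}_n})$, and the task is to reorganize these against the $\theta$-Jacobian into the stated coefficients.

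The main obstacle is precisely this reorganization: matching the coefficient produced by the covariance computation, $\det(K_{\alpha_1\cup\mathcal{B}_n})-\det(K_{\alpha_1})\det(K_{\mathcal{B}_n})$, to the coefficient $\det(K_{\alpha_2})(1-\det(K_{\mathcal{B}_n}))$ in the statement. The two coincide trivially in the independent (diagonal-$K$) regime, where $\partial\theta^{\mathcal{B}_n}/\partial u^{\alpha_2}=0$ and the sum is empty, but they are not equal term by term in general, so the identification must use the global structure of the embedding rather than a pointwise comparison. The natural tool is the zeta/Möbius relation implied by \eqref{eq:recTheta}, $\log\det(R_{\mathcal{I}})=\sum_{\alpha_2\subseteq\mathcal{I}}u^{\alpha_2}+\sum_{3\le l\le|\mathcal{I}|}\sum_{\mathcal{J}_l\subseteq\mathcal{I}}\theta^{\mathcal{J}_l}$, whose derivative furnishes the constraint $\sum_{\alpha_2\subseteq\mathcal{B}\subseteq\mathcal{I}}\partial\theta^{\mathcal{B}}/\partial u^{\alpha_2}=\partial\log\det(R_{\mathcal{I}})/\partial u^{\alpha_2}-1$ on the $\theta$-Jacobian, to be combined with DPP conditioning identities relating $\det(K_{\alpha_1\cup\mathcal{B}_n})$ to $\det(K_{\mathcal{B}_n})$. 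I would therefore treat the verification of this coefficient identity—either directly, or only after summation against $\partial\theta^{\mathcal{B}_n}/\partial u^{\alpha_2}$—as the crux of part (b), with part (a) and the leading term of (b) being routine consequences of the singleton-decoupling reduction.
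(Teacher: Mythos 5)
Your route is the same as the paper's: both arguments rest on the block structure of the Jacobian $B=\partial\theta/\partial u$ (only $\theta^{\alpha_{1}}=u^{\alpha_{1}}$ involves a quality parameter, and $\theta^{\mathcal{I}_{k}}$ involves $u^{\alpha_{2}}$ only when $\alpha_{2}\subset\mathcal{I}_{k}$), the pullback $\mathcal{G}(u)=B^{\top}MB$ from (\ref{eq: Fisher w.r.t.u}), and the evaluation $\mathrm{Cov}_{\theta(u)}[T_{\mathcal{I}},T_{\mathcal{J}}]=\mathrm{det}(K_{\mathcal{I}\cup\mathcal{J}})-\mathrm{det}(K_{\mathcal{I}})\mathrm{det}(K_{\mathcal{J}})$. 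Your part (a) is complete and correct, and your chain-rule derivation of $\mathcal{G}_{\alpha_{1}b}=\partial^{2}\psi/\partial u^{\alpha_{1}}\partial u^{b}$ is equivalent to the paper's score-function argument.

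On part (b), the expression you reach,
\begin{align*}
\mathcal{G}_{\alpha_{1}\alpha_{2}}(u)=\mathrm{Cov}[T_{\alpha_{1}},T_{\alpha_{2}}]+\sum_{\mathcal{B}_{n}\,:\,\alpha_{2}\subsetneq\mathcal{B}_{n},\ 2<n\le m}\mathrm{Cov}[T_{\alpha_{1}},T_{\mathcal{B}_{n}}]\,\frac{\partial\theta^{\mathcal{B}_{n}}}{\partial u^{\alpha_{2}}},
\end{align*}
is exactly where the paper's own proof stops (it is the $M_{sp}+M_{sh}C$ block of $B^{\top}MB$). The ``crux'' you flag is real, but you should not hunt for a M\"obius or conditioning identity to close it: the coefficient $\mathrm{Cov}[T_{\alpha_{1}},T_{\mathcal{B}_{n}}]=\mathrm{det}(K_{\alpha_{1}\cup\mathcal{B}_{n}})-\mathrm{det}(K_{\alpha_{1}})\mathrm{det}(K_{\mathcal{B}_{n}})$ and the displayed coefficient $\mathrm{det}(K_{\alpha_{2}})(1-\mathrm{det}(K_{\mathcal{B}_{n}}))$ are genuinely different functions of $K$ (the latter does not depend on $\alpha_{1}$ at all; already for $m=3$ with $\alpha_{1}=\{1\}$, $\alpha_{2}=\{2,3\}$, $\mathcal{B}_{3}=\{1,2,3\}$ one gets $\mathrm{det}(K_{\{1,2,3\}})(1-K_{11})$ versus $\mathrm{det}(K_{\{2,3\}})(1-\mathrm{det}(K_{\{1,2,3\}}))$, and the single derivative $\partial\theta^{\{1,2,3\}}/\partial u^{\{2,3\}}$ is generically nonzero). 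The displayed coefficient appears to be a slip in the statement --- it is $\mathrm{Cov}[T_{\alpha_{2}},T_{\mathcal{B}_{n}}]=\mathrm{det}(K_{\mathcal{B}_{n}})(1-\mathrm{det}(K_{\alpha_{2}}))$ with the two index sets transposed, where the correct entry is $\mathrm{Cov}[T_{\alpha_{1}},T_{\mathcal{B}_{n}}]$. So your derivation \emph{is} the proof; what needs correcting is the stated formula, not your argument. One minor inaccuracy: your diagonal-$K$ sanity check does not quite work as stated, since $u^{\alpha_{2}}=0$ lies on the boundary of the parameter domain and $\partial\theta^{\mathcal{B}_{n}}/\partial u^{\alpha_{2}}$ is singular there rather than zero; nothing in your main argument relies on it.
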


A key tool for this result is
the covariance expression (\ref{eq: Fisher information of exp}) of the Fisher information matrix for the underlying exponential family of the DPP model evaluated at a DPP: Let $\mathcal{I},
\mathcal{J}
\in \cup_{k=1}^{m}\mathcal{S}_{k}^{m}$.
At the distribution in the log-linear model (\ref{eq:ExpDPP}) corresponding to a DPP 
(that is, $\theta=\theta(u)$),
the $(\mathcal{I},\mathcal{J})$-component of the Fisher information matrix $\mathcal{G}(\theta)$ with respect to $\theta$ of the log-linear model (\ref{eq:ExpDPP}) is given 
\begin{align*}
\mathcal{G}_{\mathcal{I} \mathcal{J}}(\theta)\Big{|}_{\theta=\theta(u)}
=\mathrm{Cov}_{\theta(u)}[T_{\mathcal{I}}
,T_{\mathcal{J}}
]
=\mathrm{det}(K_{\mathcal{I}\cup\mathcal{J}})-\mathrm{det}(K_{\mathcal{I}})
\mathrm{det}(K_{\mathcal{J}}).
\end{align*}
The proof of this result is given in Section \ref{sec: proofs}.

\begin{remark}[Conditional potential function]    
The first part of Corollary \ref{cor: Fisher w.r.t. u} conveys an implication on the role of $\psi(u)$: that is, the Fisher information matrix for $(u^{\alpha_{1}} )_{\alpha_{1}\in\mathcal{S}_{1}^{m}}$ is characterized by the sub-matrix of Hessian of $\psi(u)$.
Because of this characterization, we call $\psi(u)$ a conditional potential function.
This property is not obvious since the DPP model is not an exponential family.
In fact, this characterization using the Hessian matrix is not applicable for the whole Fisher information matrix:
$\mathcal{G}(u)\ne \nabla^{2}_{u}\psi(u)$ in general.
Blessed with the $\mathrm{e}$-embedding flat structure of $(u^{\alpha_{1}})_{\alpha_{1}\in\mathcal{S}_{1}^{m}}$, the part of the Fisher information matrix related to $(u^{\alpha_{1}})_{\alpha_{1}\in\mathcal{S}_{1}^{m}}$ has this property.
\end{remark}

\begin{remark}[Non-orthogonality]
From Corollary \ref{cor: Fisher w.r.t. u} (b), item-wise effects (quality terms)
$(u^{\alpha_{1}})_{\alpha_{1}\in\mathcal{S}_{1}^{m}}$
and pair-wise effects (diversity terms) $(u^{\alpha_{2}})_{\alpha_{2}\in\mathcal{S}_{2}^{m}}$ are not necessarily orthogonal in the Fisher information matrix:
for $\alpha_{1}\in\mathcal{S}_{1}^{m}$ and $\alpha_{2}\in\mathcal{S}_{2}^{m}$,
$\mathcal{G}_{\alpha_{1}\alpha_{2}}(u)\ne 0$ in general.
A disappointing message from this is that estimation of $(u^{\alpha_{1}})_{\alpha_{1}\in\mathcal{S}_{1}^{m}}$ and that of $(u^{\alpha_{2}})_{\alpha_{2}\in\mathcal{S}_{2}^{m}}$ cannot be separated to attain efficient estimation.
\end{remark}

\subsection{Duality in determinantal point processes}

Theorem \ref{thm:DPPiscurvedexp} clarifies the role of an $L$-ensemble kernel in describing the DPP model as a curved exponential family.
What is a role of a marginal kernel $K$ in describing a DPP as a statistical model?
The following lemma answers this question and
implies that
the expectation parameter $\eta=\eta(u)$ of the underlying exponential family (\ref{eq:ExpDPP}) restricted to the DPP model is given by a marginal kernel $K$.
This relationship between the $K$ and $L$
 matrices exhibits a duality within the DPP model, revealing how both matrices play pivotal roles in the model's geometric structure.

\begin{lemma}
\label{lemma: expectation parameter}
    The expectation parameter of (\ref{eq:ExpDPP})
    evaluated at a DPP
    is written as 
    \[
    \eta_{\mathcal{I}_{k}}
    =
    \mathrm{det}(K_{\mathcal{I}_{k}}) \quad (\mathcal{I}_{k}\in \mathcal{S}_{k}^{m},\quad k=1,\ldots,m).
    \]
    Further, the expectation parameter $\eta_{\alpha_{1}}$ for $\alpha_{1}\in\mathcal{S}_{1}^{m}$ evaluated at a DPP is written as 
    \begin{align}
    \eta_{\alpha_{1}} =(\partial/ \partial u^{\alpha_{1}}) \psi(u).
    \label{eq: eta is grad phi}
    \end{align}
\end{lemma}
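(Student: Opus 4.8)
The plan is to prove the two claims in the lemma in sequence, the first being the key computation and the second a direct consequence of the partially flat structure established earlier.

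For the first claim, I would use the defining property of the expectation parameter, equation~(\ref{eq: expectation parameter}), which gives $\eta_{\mathcal{I}_k} = \mathrm{E}_\theta[T_{\mathcal{I}_k}(A)]$. Since $T_{\mathcal{I}_k}(A) = 1_{i_1 \in A, \ldots, i_k \in A}$ is the indicator that all of $i_1,\ldots,i_k$ belong to the random set $Y$, its expectation under $P_\theta$ is exactly the inclusion probability $P(\mathcal{I}_k \subseteq Y)$. Now I would invoke the crucial point that the underlying exponential family $\mathcal{E}$ was constructed (in Theorem~\ref{thm:DPPiscurvedexp}) precisely so that the curve $\theta = \theta(u)$ reproduces the DPP distributions: at $\theta = \theta(u)$ we have $P_\theta = P_L$ for the corresponding $L$-ensemble. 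Therefore $\mathrm{E}_{\theta(u)}[T_{\mathcal{I}_k}] = P_L(\mathcal{I}_k \subseteq Y)$, and by the very definition of a DPP with marginal kernel $K$, this inclusion probability equals $\mathrm{det}(K_{\mathcal{I}_k})$. This gives $\eta_{\mathcal{I}_k} = \mathrm{det}(K_{\mathcal{I}_k})$ directly. The only subtlety I would need to handle carefully is making the identification $P_{\theta(u)} = P_L$ rigorous—i.e., confirming that the sufficient statistics $T_{\mathcal{I}_k}$ chosen in the embedding are indeed the inclusion indicators whose expectations are governed by $K$, rather than some other statistics. This is really where the content lives, and it should follow from tracing through the construction in Theorem~\ref{thm:DPPiscurvedexp}.

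For the second claim, equation~(\ref{eq: eta is grad phi}), I would argue that $\eta_{\alpha_1} = (\partial/\partial u^{\alpha_1})\psi(u)$ for singletons $\alpha_1 \in \mathcal{S}_1^m$. The natural route is to combine the general relation $\eta_i = \partial_i \phi(\theta)$ (Legendre duality in the ambient exponential family) with the chain rule along the embedding and then exploit the partial flatness from Corollary~\ref{cor: e-curvature}. Concretely, differentiating $\phi(\theta(u))$ with respect to $u^{\alpha_1}$ gives $\sum_i B^i_{\alpha_1} \eta_i$ by the chain rule and the identity $\eta_i = \partial_i \phi$. The key structural fact from Theorem~\ref{thm:DPPiscurvedexp} is that the singleton parameters $u^{\alpha_1} = \log L_{\alpha_1}$ appear as canonical coordinates $\theta^{\alpha_1}$ directly (they are not involved in the recursive construction of the higher $\theta^{\mathcal{I}_k}$, as emphasized after the theorem), so the Jacobian $B^i_{\alpha_1}$ is especially simple. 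This decoupling is what makes $\psi(u)$ behave like a genuine potential for the singleton block.

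I expect the main obstacle to be cleanly relating the conditional potential $\psi(u)$ to the ambient potential $\phi(\theta(u))$ and showing their $u^{\alpha_1}$-derivatives agree. The definition of $\psi(u)$ in Theorem~\ref{thm:DPPiscurvedexp} includes the explicit $\sum_{\alpha_1} u^{\alpha_1}$ term plus a log-determinant, and I would need to verify that differentiating this expression in $u^{\alpha_1}$ produces exactly $\eta_{\alpha_1} = \mathrm{det}(K_{\alpha_1}) = K_{aa}$. The cleanest strategy is probably to show $\partial_{u^{\alpha_1}} \psi(u) = \partial_{u^{\alpha_1}} \phi(\theta(u))$ by checking that the terms in $\phi$ coming from the constrained parameters $\theta^{\mathcal{I}_k}(u)$ ($k \geq 3$) contribute nothing under $\partial/\partial u^{\alpha_1}$, which is guaranteed precisely because $u^{\alpha_1}$ does not enter the recursion~(\ref{eq:recTheta}). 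Once that vanishing is established, the remaining derivative of $\phi$ reduces to the singleton expectation parameter $\eta_{\alpha_1} = \mathrm{det}(K_{\alpha_1})$, closing the argument and tying both parts of the lemma together.
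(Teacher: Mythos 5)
Your proposal is correct and follows essentially the paper's own route: the first claim is exactly the paper's one-line argument ($\eta_{\mathcal{I}_k}=\mathrm{E}_{\theta(u)}[T_{\mathcal{I}_k}]=P_{L}(\mathcal{I}_k\subseteq Y)=\mathrm{det}(K_{\mathcal{I}_k})$), and your second argument---chain rule on $\phi(\theta(u))$ together with the fact that the constrained parameters $\theta^{\mathcal{I}_k}(u)$, $k\ge 3$, do not involve $u^{\alpha_1}$---is equivalent to the paper's use of the zero-mean score identity $\partial\log P_u(A)/\partial u^{\alpha_1}=T_{\alpha_1}(A)-\partial\psi(u)/\partial u^{\alpha_1}$. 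The identification $\psi(u)=\phi(\theta(u))$ that you flag as the remaining subtlety is immediate from normalization (both equal $\log\mathrm{det}(L+I_{m\times m})$, as noted at the start of the proof of Theorem \ref{thm:DPPiscurvedexp}), so there is no gap.
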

\begin{proof}
Since $\eta_{\mathcal{I}_{k}}$ is equal to $\mathrm{E}_{\theta}[T_{\mathcal{I}_{k}}]$, and $\mathrm{E}_{\theta}[T_{\mathcal{I}_{k}}]$ at a DPP is equal to $\mathrm{det}(K_{\mathcal{I}_{k}})$,
we conclude the first assertion.
The score vector of the DPP model with respect to $(u^{\alpha_{1}})_{\alpha_{1}\in\mathcal{S}_{1}^{m}}$
is given by
\[
(\partial / \partial u^{\alpha_{1}}) \log P_{u}(A)
= T_{\alpha_{1}}(A)- (\partial/ \partial u^{\alpha_{1}}) \psi(u)
\quad (\alpha_{1}\in\mathcal{S}_{1}^{m}).
\]
Then taking the expectation of both sides yields
\[
0= \eta_{\alpha_{1}}-(\partial/ \partial u^{\alpha_{1}}) \psi(u),
\quad (\alpha_{1}\in\mathcal{S}_{1}^{m})
\]
which concludes the second assertion.
\end{proof}

\begin{remark}[Relation to the Laplace expansion]
    The latter claim is also verified via  the Laplace expansion.
    We take $\eta_{\{1\}}=K_{11}$ as an example.
    By working with the diagonal scaling $L=DRD$, $K_{11}$ has the form
    \[
    K_{11}=\left(
    L(L+I_{m\times m})^{-1}
    \right)_{11}
    =\left(DR(R+D^{-2})^{-1}D^{-1}\right)_{11}
    =\left(R(R+D^{-2})^{-1}\right)_{11}.
    \]
    Let $\Delta_{ij}$ ($1\le i,j\le m$) denote the $(i,j)$-cofactor of $(R+D^{-2})$.
    Using the cofactors of $(R+D^{-2})$, we get the following expression of $K_{11}$:
    \[
K_{11}=
\sum_{j=1}^{m}R_{1j}\left((R+D^{-2})^{-1}\right)_{j1}
=
\sum_{j=1}^{m}R_{1j}\frac{\Delta_{j1}}{\mathrm{det}(R+D^{-2})}
=
\frac{\Delta_{11}+\sum_{j=2}^{m}\rho_{1j}\Delta_{j1}}{\mathrm{det}(R+D^{-2})}.
    \]
    Note that the Laplace expansion yields
    \[
    \mathrm{det}(R+D^{-2})=
    \sum_{j=1}^{m}(R+D^{-2})_{1j}\Delta_{j1}
    =
    (1+e^{-u^{\{1\}}})\Delta_{11}+\sum_{j=2}^{m}\rho_{1j}\Delta_{j1},
    \]
    which gives 
    \begin{align}
K_{11}
=\frac{\left\{(1+e^{-u^{\{1\}}})
\Delta_{11}+\sum_{j=2}^{m}\rho_{1j}\Delta_{j1}\right\}
-e^{ -u^{\{1\}}}\Delta_{11}
}{\mathrm{det}(R+D^{-2})}
    =1-e^{-u^{\{1\}}}\frac{\Delta_{11}}{\mathrm{det}(R+D^{-2})}.
    \label{eq: expression of K}
    \end{align}
    At the same time, the partial derivative $(\partial/\partial u^{\{1\}}) \psi(u)$ is written as
    \begin{align}
    (\partial/\partial u^{\{1\}}) \psi(u)
    &=
    (\partial/\partial u^{\{1\}})\{
    \log \mathrm{det}D^{2} + \log \mathrm{det}(R+D^{-2})\}\nonumber\\
    &=1+\frac{\partial (R+D^{-2})_{11}}{ \partial u^{\{1\}}}\frac{\Delta_{11}}{\mathrm{det}(R+D^{-2})}
    \nonumber\\
    &=1-e^{ -u^{\{2\}}}\frac{\Delta_{11}}{\mathrm{det}(R+D^{-2})}.
    \label{eq: derivative of psi}
    \end{align}
    Thus, 
    combining (\ref{eq: expression of K}) with (\ref{eq: derivative of psi}) also verifies the second claim of Lemma \ref{lemma: expectation parameter}.
\end{remark}

Duality revealed by Lemma \ref{lemma: expectation parameter} suggests us yet another parameterization of the DPP model:
\[
\omega:=(\eta_{\{1\}},\ldots,\eta_{\{m\}}\,;\,u^{\{1,2\}},\ldots,u^{\{(m-1),m\}}).
\]
This type of parameterization has been often used in the analyses of the log-linear model 
\citep{Amari2001,Sugiyama_Nahakara_Tsuda_2016}
and more generally models with the dual foliation structure such as the minimum information dependence model \citep{Sei_Yano_2024}.
It is called a mixed parameterization.
It may not be obvious that the mixed parameterization is well-defined
since the DPP model is a curved exponential family.
By employing a property of the conditional potential function $\psi(u)$,
we ensure the well-definedness of the mixed parameterization $\omega$ in the DPP model as follows.
\begin{lemma}
    There is a one-to-one correspondence between $\omega$ and $u$.
\end{lemma}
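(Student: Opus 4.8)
The plan is to exploit the fact that the pairwise coordinates $(u^{\alpha_2})_{\alpha_2\in\mathcal{S}_2^m}$ are shared between $u$ and $\omega$, so that the correspondence reduces to a statement about the singleton coordinates alone. Writing $u=(s,p)$ with $s=(u^{\{1\}},\ldots,u^{\{m\}})$ and $p=(u^{\{1,2\}},\ldots,u^{\{(m-1),m\}})$, and $\omega=(e,p)$ with $e=(\eta_{\{1\}},\ldots,\eta_{\{m\}})$, it suffices to prove that for each fixed $p\in(-\infty,0)^{m(m-1)/2}$ the map $s\mapsto e$ is a bijection onto the set of admissible values of $e$; the $p$-block then passes through unchanged.

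First I would record the structural observation underlying Theorem \ref{thm:DPPiscurvedexp}: comparing \eqref{eq:cExpDPP} with the ambient family \eqref{eq:ExpDPP} shows that the singleton canonical coordinates coincide with the free singleton parameters, $\theta^{\{a\}}=u^{\{a\}}$, while every higher canonical coordinate $\theta^{\mathcal{I}_k}(u)$ with $|\mathcal{I}_k|\ge 2$ is built, through \eqref{eq:recTheta}, solely from the pairwise parameters and is therefore independent of $s$. Consequently, for fixed $p$, the normalizer $\psi(\cdot,p)$ is exactly the log-partition function of a genuine exponential family with natural parameter $s$ and sufficient statistics $(T_{\{a\}})_{a=1}^m$, the higher-order terms contributing only an $s$-independent tilt of the base measure $\mu_p(A)\propto\exp(\sum_{|\mathcal{I}|\ge 2}\theta^{\mathcal{I}}(p)T_{\mathcal{I}}(A))$. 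In particular $\psi(\cdot,p)$ is convex in $s$.

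Next I would identify the map $s\mapsto e$ as the gradient (Legendre) map of this conditional potential. Lemma \ref{lemma: expectation parameter} gives $\eta_{\{a\}}=\partial\psi/\partial u^{\{a\}}$, so the Jacobian of $s\mapsto e$ is the Hessian block $(\partial^2\psi/\partial u^{\{a\}}\partial u^{\{b\}})_{a,b}$, which by Corollary \ref{cor: Fisher w.r.t. u}(a) equals $\mathcal{G}_{\{a\}\{b\}}(u)=\mathrm{Cov}_{\theta(u)}[T_{\{a\}},T_{\{b\}}]$, the covariance matrix of the occupancy indicators $1_{a\in Y}$ under the DPP. Since $L$ is positive definite, all eigenvalues of $K$ lie in $(0,1)$, the DPP assigns positive probability to every $A\subseteq\mathcal{Y}$, and this covariance matrix is positive definite. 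Hence $\psi(\cdot,p)$ is strictly convex in $s$, its gradient map is injective, and—invoking the standard steepness of regular exponential families—it is a diffeomorphism of $\mathbb{R}^m$ onto the interior of the convex hull of the support of $(T_{\{a\}})$. Because these indicators range over all of $\{0,1\}^m$ and $\mu_p$ has full support, that image is exactly the open cube $(0,1)^m$, which is precisely the admissible range of $e=(K_{11},\ldots,K_{mm})$. This yields the bijection: $p$ is read off directly and $s$ is recovered from $e$ by inverting the Legendre map.

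The main obstacle is the global invertibility rather than the local one: the positive-definite Hessian alone only gives a local diffeomorphism, so the argument genuinely hinges on recognizing $\psi(\cdot,p)$ as an honest exponential-family log-partition function, after which the classical global Legendre duality (an injective, proper gradient map that is surjective onto the open moment polytope) applies. The one point requiring care is matching this image with the admissible $e$-domain, i.e.\ verifying that the attainable values of $(K_{aa})_a$ for fixed correlation structure fill out $(0,1)^m$, which follows from the full support of the DPP.
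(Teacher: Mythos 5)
Your proof is correct and follows essentially the same route as the paper's: for fixed pairwise parameters, identify the map $s\mapsto e$ as the gradient of the strictly convex conditional potential $\psi(\cdot,p)$ via Lemma \ref{lemma: expectation parameter} and Corollary \ref{cor: Fisher w.r.t. u}(a), and conclude injectivity. You go somewhat further than the paper by justifying positive definiteness of the Hessian (as the covariance of the occupancy indicators under a full-support distribution) and by characterizing the image as $(0,1)^m$ via regular exponential-family theory, both of which are valid and tighten an argument the paper leaves terse.
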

\begin{proof}
Given $(u^{\alpha_{2}})_{\alpha_{2}\in\mathcal{S}_{2}^{m}}$,
$\psi(u)$ is strictly convex by Corollary \ref{cor: Fisher w.r.t. u}.
This, together with Lemma \ref{lemma: expectation parameter} implies that
the function $(u^{\alpha_{1}})_{\alpha_{1}\in\mathcal{S}_{1}^{m}}\mapsto ( \partial \phi(u) /\partial u^{\alpha_{1}} )_{\alpha_{1}\in\mathcal{S}_{1}^{m} } $ is injective,
which completes the proof.
\end{proof}

\begin{remark}[The Kullback--Leiber divergence]
Using the mixed parameterization, we obtain an interesting formula for the Kullback--Leibler divergence between DPPs. Let $u$ and $\tilde{u}$ be two points in $\mathbb{R}^{m}\times (-\infty,0)^{m(m-1)/2}$ with $u^{\alpha_{2}}=\tilde{u}^{\alpha_{2}}$ ($\alpha_{2}\in\mathcal{S}_{2}^{m}$). 
Then we have 
\begin{align*}
D[\,P_{u} \,:\, P_{\tilde{u}}\,]
:=\sum_{A\subseteq \mathcal{Y}}P_{u}(A)\log \frac{P_{u}(A)}{P_{\tilde{u}} (A)}
=\psi^{*}(\omega)+\psi(\tilde{u})-
\sum_{\alpha_{1}\in\mathcal{S}_{1}^{m}}\tilde{u}^{\alpha_{1}}\eta_{\alpha_{1}},
\end{align*}
where $\omega$ denotes the mixed parameterization of $u$, and $\psi^{*}(\omega)$ is the Legendre transform of $\psi(u)$ given $(u^{\alpha_{2}})_{\alpha_{2}\in\mathcal{S}_{2}^{m}}$:
\[
\psi^{*}(\omega)=\max_{ u^{\alpha_{1}} \in \mathbb{R}^{m}} \left\{ 
u^{\alpha_{1}}\eta_{\alpha_{1}} - \psi(u)
\right\}.
\]
This type of expression appears in 
the Kullback--Leibler divergence for an exponential family. 
Endowed with the $\mathrm{e}$-embedding flat structure (Corollary \ref{cor: e-curvature}) of $(u^{\alpha_{1}})_{\alpha_{1}\in\mathcal{S}_{1}^{{m}}}$, 
the Kullback--Leibler divergence for the DPP model partially enjoys this expression.
\end{remark}

\section{Examples}
\label{sec: examples}
This section analytically checks the results by using DPP with $m=2$.
It also analyzes the structure of DPP with $m=3$ by using the numerical computation.

\subsection{Determinantal point process with $m=2$}

We begin with DPP with $m=2$.
Observe that 
with the diagonal scaling $L=DRD$,
all probabilities are given by
\begin{align*}
P_{L}(\emptyset)&=e^{-c(L)},
\quad
P_{L}(\{1\})=
e^{\log L_{11}-c(L)},
\quad
P_{L}(\{2\})=
e^{\log L_{22}-c(L)},\\
P_{L}(\{1,2\})&=e^{\log L_{11}+\log L_{22}
+\log|1-\rho_{12}^{2}|-c(L)},
\end{align*}
where $c(L):=\log \mathrm{det} (L+I_{2\times 2})$.
This implies that the DPP model with $m=2$ forms an exponential family
\begin{align*}
P_{L}(A)=P_{u}(A)=e^{u^{\{1\}}T_{\{1\}}(A)
+u^{\{2\}}T_{\{2\}}(A)
+u^{\{1,2\}}T_{\{1,2\}}(A)
-\psi(u)
},
\end{align*}
where
\begin{align*}
T_{\{1\}}(A)&=1_{1\in A},\quad
T_{\{2\}}(A)=1_{2\in A},\quad
T_{\{1,2\}}(A)=1_{1,2\in A}\quad \text{and}\\
u^{\{1\}}&=\log L_{11},\quad
u^{\{2\}}=\log L_{22},\quad
u^{\{1,2\}}=\log (1-\rho_{12}^{2})\\
\psi(u)&=u^{\{1\}}+u^{\{2\}}+\log 
\left\{e^{u^{\{1,2\}}} + e^{-u^{\{1\}}} + e^{-u^{\{2\}}}+ e^{-u^{\{1\}}-u^{\{2\}}} \right\}.
\end{align*}
The conditional potential function $\psi(u)$ becomes an actual potential function.
Figure \ref{fig: conditional potential} (a) displays the contour plot of $\psi(u)$ with respect to $u^{\{1\}}$ and $u^{\{2\}}$ at $u^{\{1,2\}}=-0.1$, from which we see the convex shape of $\psi(u)$ with respect to $u^{\{1\}}$ and $u^{\{2\}}$.
The $\mathrm{e}$-embedding curvature vanishes because this DPP model is an exponential family.

\begin{figure}[htbp]
    \centering
\includegraphics[scale=0.5]{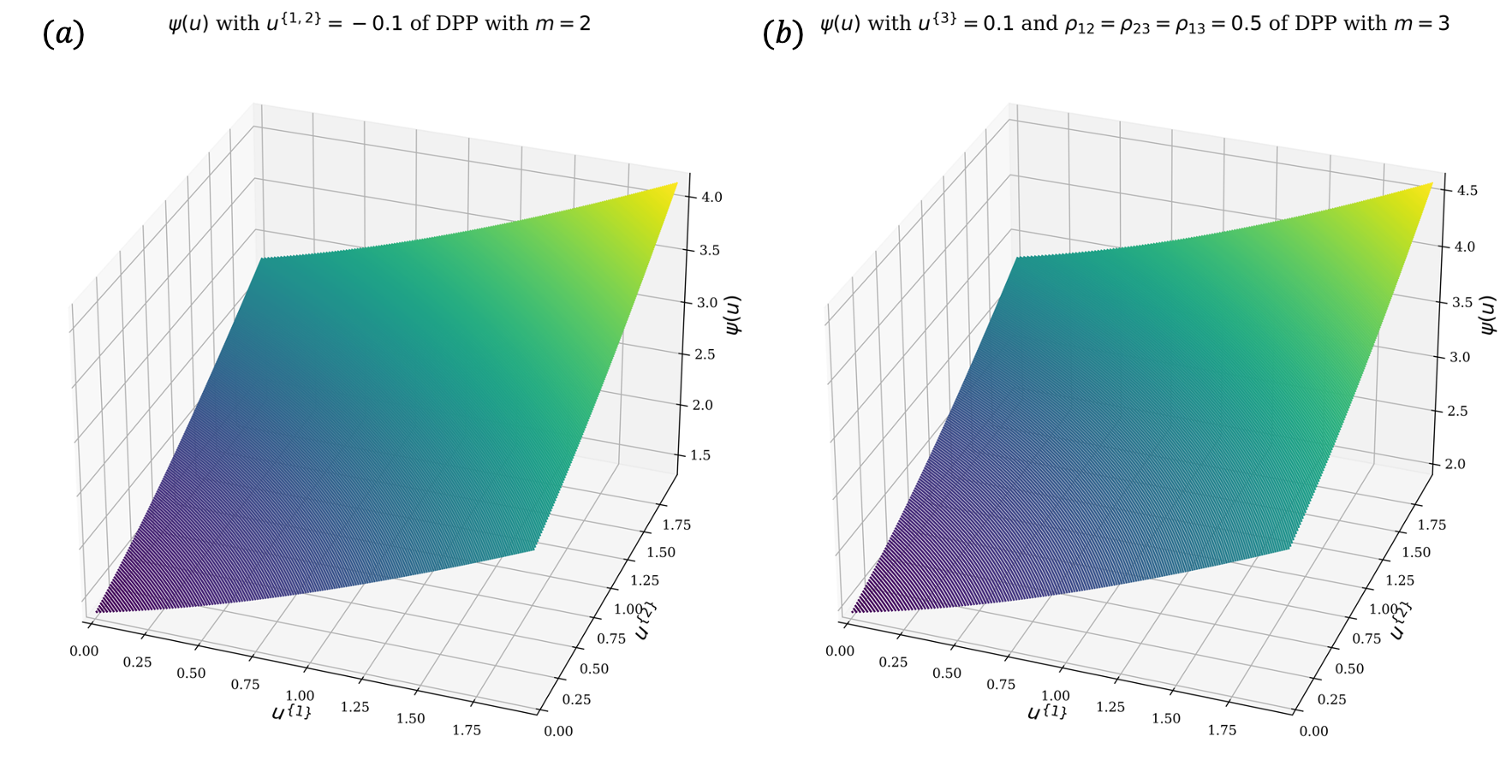}
    \caption{Contour plots of conditional potential functions $\psi(u)$ for DPPs with $m=2$ and $m=3$. (a) Conditional potential function for the DPP model with $m=2$. The evaluation fixes the value of $u^{\{1,2\}}$ to $-0.1$; (b)
    Conditional potential function for the DPP model with $m=3$. The evaluation fixes the values of $u^{\{3\}},u^{\{1,2\}},u^{\{2,3\}},u^{\{3,1\}}$ to $-0.1, \log(1-{0.5}^2)$, respectively.
    }
    \label{fig: conditional potential}
\end{figure}

We check the duality in a marginal kernel $K$ and an $L$-ensemble kernel $L$.
Working with the identity $K=L(L+I_{2\times 2})^{-1}=R(R+D^{-2})^{-1}$,
we obtain the identities
\begin{align*}
K_{11}
&=\frac{e^{-u^{\{2\}}}+e^{u^{\{1,2\}}}}
{e^{-u^{\{1\}}}+e^{-u^{\{2\}}}
+e^{-u^{\{1\}}-u^{\{2\}}}
+e^{u^{\{1,2\}}}},
\quad 
K_{22}
=\frac{e^{-u^{\{1\}}}+e^{u^{\{1,2\}}}}
{e^{-u^{\{1\}}}+e^{-u^{\{2\}}}
+e^{-u^{\{1\}}-u^{\{2\}}}
+e^{u^{\{1,2\}}}}, \\
\mathrm{det}(K)
&=\frac{e^{u^{\{1,2\}}}}
{e^{-u^{\{1\}}}+e^{-u^{\{2\}}}
+e^{-u^{\{1\}}-u^{\{2\}}}
+e^{u^{\{1,2\}}}}.
\end{align*}
Together with the differentiation of the (conditional) potential function $\psi(u)$, we confirm the duality formulae:
\begin{align*}
K_{11}
(=:\eta_{\{1\}})=\frac{\partial \psi(u)}{\partial u^{\{1\}}},\quad
K_{22}
(=:\eta_{\{2\}})
=\frac{\partial \psi(u)}{\partial u^{\{2\}}},\quad \text{and}\quad
\mathrm{det}(K)
(=:\eta_{\{1,2\}})
=\frac{\partial \psi(u)}{\partial u^{\{1,2\}}}.
\end{align*}
Consider the Fisher information matrix  for the mixed parameterization
$\omega= ( \eta_{\{1\}} , \eta_{\{2\}} \,;\, \theta^{\{1,2\}}  )$.
Observe that the DPP also has the form
\[
P_{\theta}(A)=(\eta_{\{1\}}-\eta_{\{1,2\}})1_{A=\{1\}}
+(\eta_{\{2\}}-\eta_{\{1,2\}})1_{A=\{2\}}
+\eta_{\{1,2\}}1_{A=\{1,2\}}
+(1-\eta_{\{1\}}-\eta_{\{2\}}+\eta_{\{1,2\}})1_{A=\emptyset}.
\]
This, together with 
$(\partial /\partial \theta^{\{1,2\}})\log P_{\theta}(A)=T_{\{1,2\}}(A)-\mathrm{E}_{\theta}[T_{\{1,2\}}(A)]$,
gives
\begin{align*}
\mathcal{G}_{\{1\}\{1,2\}}(\omega)
=
\sum_{A\subseteq 2^{\{1,2\}}}P_{\theta}(A) 
\frac{\partial}{\partial \eta_{\{1\}}}\log P_{\theta}(A)
\frac{\partial}{\partial \theta^{\{1,2\}}}\log P_{\theta}(A)
=\frac{\partial \psi(u)}{\partial \eta_{\{1\}}\partial u^{\{1,2\}} }.
\end{align*}

\subsection{Determinantal point process with $m=3$}

We then work with DPP of $m=3$.
In this case, we get the following form of the DPP model as the curved exponential family (\ref{eq:cExpDPP}):
\begin{align*}
P_{u}(A)=e^{\sum_{a=1}^{3}u^{\{a\}}T_{\{a\}}(A)
+\sum_{1\le a<b\le 3}u^{\{a,b\}}T_{\{a,b\}}(A)
+\theta^{\{1,2,3\}}T_{\{1,2,3\}}(A)
-\psi(u)
},
\end{align*}
where
\begin{align*}
T_{\{a\}}(A)&=1_{a\in A},\quad
u^{\{a\}}=\log L_{aa}
\quad (a=1,2,3)\\
T_{\{a,b\}}(A)&=1_{a\in , b\in A}, \quad
u^{\{a,b\}}=\log(1-\rho_{ab}^{2})\quad
(1\le a< b\le 3),
\end{align*}
and
\begin{align*}
\begin{split}
\theta^{\{1,2,3\}}&=
\log\left(e^{ u^{\{1,2\}}}
+
e^{u^{\{2,3\}}}
+
e^{u^{\{1,3\}}}
+2 \sqrt{
(1-e^{u^{\{1,2\}}})
(1-e^{u^{\{2,3\}}})
(1-e^{u^{\{1,3\}}})
}
-2
\right) \\
&\quad -u^{\{1,2\}}
       -u^{\{2,3\}}
       -u^{\{1,3\}}.
       \end{split}
\end{align*}
The conditional potential function $\psi(u)$ is given by
\begin{align*}
\psi(u)&=u^{ \{1\}} + u^{ \{2\}} + u^{ \{3\}}\\
&\quad +\log
\left(
(1+e^{-u^{\{1\}}})
(1+e^{-u^{\{2\}}})
(1+e^{-u^{\{3\}}})
+2\sqrt{ (1-e^{u^{\{1,2\}} })
(1-e^{u^{\{1,3\}} })
(1-e^{u^{\{2,3\}} })
}
\right.
\\
&
\left.
\quad\quad\quad\quad
-(1-e^{u^{\{1,2\}}})(1+e^{ -u^{\{3\}}})
-(1-e^{u^{\{1,3\}}})(1+e^{ -u^{\{2\}}})
-(1-e^{u^{\{2,3\}}})(1+e^{ -u^{\{1\}}})
\right).
\end{align*}

\begin{figure}[htbp]
    \centering
\includegraphics[scale=0.5]{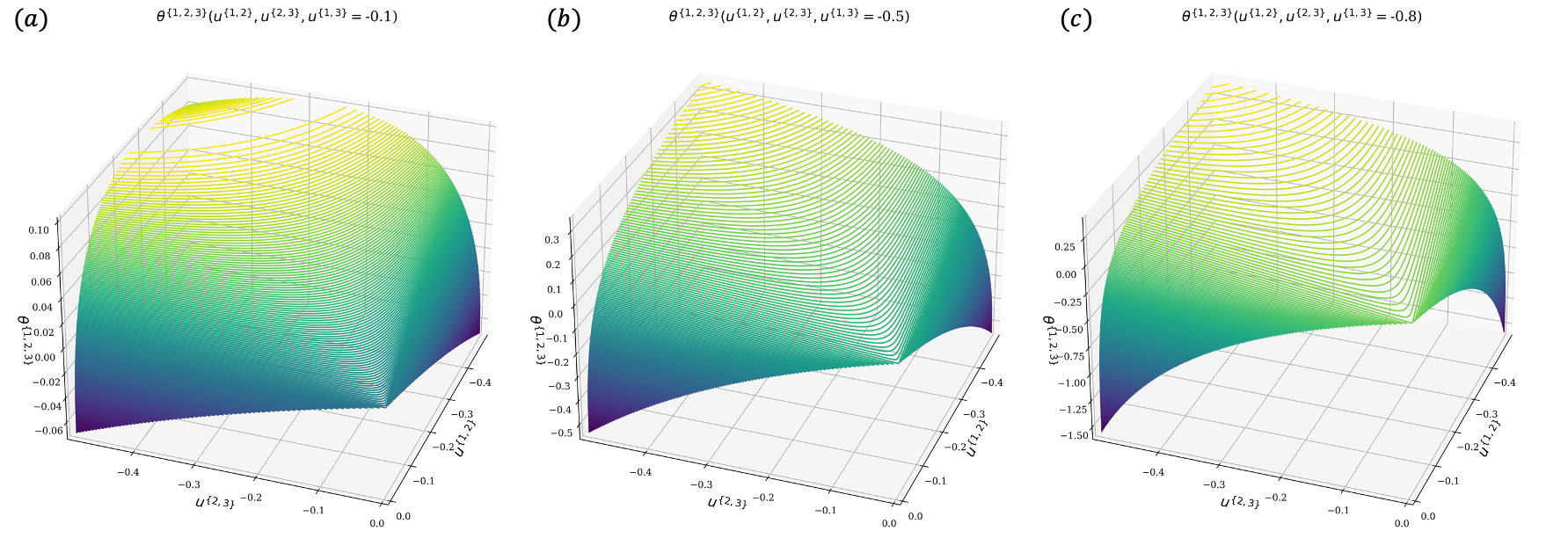}
    \caption{Plots of $\theta^{\{1,2,3\}}(u^{\{1,2\}}, u^{\{2,3\}}, u^{\{1,3\}})$. (a) the plot at $u^{\{1,3\}}=-0.1$; (b) the plot at $u^{\{1,3\}}=-0.5$; (c) the plot at $u^{\{1,3\}}=-0.8$
    }
    \label{fig: curvature of DPP 3}
\end{figure}

Here we see shapes of the conditional potential function $\psi(u)$ and the embedding $\theta^{\{1,2,3\}}(u)$.
Figure \ref{fig: conditional potential} (b) exhibits the shape of the conditional potential function $\psi(u)$.
Although it has a relatively complex shape, the contour plot reveals the convex shape of $\psi(u)$ with respect to $u^{\{1\}}$ and $u^{\{2\}}$.
Figure \ref{fig: curvature of DPP 3} shows the embedding structure of the DPP model via $\theta^{\{1,2,3\}}(u^{\{1,2\}}, u^{\{2,3\}}, u^{\{1,3\}})$.
The embedding $\theta^{\{1,2,3\}}(u^{ \{1,2\} }, u^{\{2,3\}}, u^{\{1,3\}})$ is symmetric as 
described by the explicit form
and depicted by Figure \ref{fig: curvature of DPP 3}.
The gradient of $\theta^{\{1,2,3\}}(u^{ \{1,2\} }, u^{\{2,3\}}, u^{\{1,3\}})$ with respect to $(u^{ \{1,2\} }, u^{\{2,3\}})$ heavily depends on $u^{\{1,3\}}$ as in Figure \ref{fig: curvature of DPP 3}.

\section{Proofs}
\label{sec: proofs}

This section presents the proofs for Theorem \ref{thm:DPPiscurvedexp}, 
and 
Corollaries \ref{cor: e-curvature} and \ref{cor: Fisher w.r.t. u}.

\begin{proof}[Proof of Theorem \ref{thm:DPPiscurvedexp}]

Before the main proof, 
note that 
by using the diagonal scaling $L=DRD$,
$\log \mathrm{det}(L+I_{m\times m})$ is written as $\psi(u)$ in the following manner:
\begin{align*}
\log \mathrm{det}(L+I_{m\times m})
&=\log \mathrm{det}D^{2}+\log\mathrm{det}(R+D^{-2})\\
&=\sum_{i=1}^{m}\log L_{ii}
+ \log\mathrm{det}(R+D^{-2})\\
&=\psi(u).
\end{align*}

For the main proof,
we consider $P_{u}(A)$ in four cases:
$|A|=1$, $|A|=2$, $|A|= 3$, $|A|\ge 4$.

\textbf{Case 1 ($A=\{i\}$, $1\le i \le m$)}:
For $A=\{i\}$, the probability $P_{u}(A)$ is written as
\begin{align*}
P_{u}(A)&=\exp(\log L_{ii} - \psi(u))
=\exp(u^{\{i\}} - \psi(u))
=\exp\left(\sum_{\alpha_{1}\in\mathcal{S}_{1}^{m}}
u^{\alpha_{1}}
T_{\alpha_{1}}(A) -\psi(u)\right),
\end{align*}
which completes Case 1.

\textbf{Case 2 ($A=\{i,j\}$, $1\le i <j \le m$)}:
Consider $A=\{i,j\}$.
By the diagonal scaling of $L$,
the probability $P_{u}(A)$ is written as
\begin{align*}
P_{u}(A)&=
\exp(\log \mathrm{det}L_{\{i,j\}} - \psi(u))
=\exp(\log \mathrm{det}D_{\{i,j\}}R_{\{i,j\}}
D_{\{i,j\}}
- \psi(u)).
\end{align*}
This, together with $\mathrm{det}R_{\{i,j\}}=1-\rho_{ij}^{2}$, yields
\begin{align*}
P_{u}(A)&=\exp(u^{\{i\}}+u^{\{j\}}+
u^{\{i,j\}}-\psi(u) )
=\exp\left(
\sum_{\alpha_{1}\in\mathcal{S}_{1}^{m}}
u^{\alpha_{1}}
T_{\alpha_{1}}(A)
+\sum_{\alpha_{2}\in\mathcal{S}_{2}^{m}}
u^{\alpha_{2}}T_{\alpha_{2}}(A)-\psi(u)
\right),
\end{align*}
which completes Case 2.

\textbf{Case 3 ($A=\{i,j,k\}$, $1\le i <j<k \le m$)}:
Consider $A=\{i,j\}$.
By the same procedure as before, we get
\begin{align*}
P_{u}(A)=\exp(\log \mathrm{det}L_{\{i,j,k\}} -\psi(u))
=\exp\left(
u^{ \{i\} }+u^{ \{j\} }+u^{ \{k\} }
+\log \mathrm{det}R_{\{i,j,k\}} -\psi(u)
\right).
\end{align*}
Observe $A=\{i,j\}\cup\{j,k\}\cup\{k,i\}$.
Then 
\[
u^{  \{i,j\} }T_{ \{i,j\} }(A)
+
u^{ \{j,k\} }T_{ \{j,k\} }(A)
+u^{ \{i,k\} }T_{ \{i,k\} }(A)
=\sum_{\alpha_{2}\subset \{i,j,k\} }
u^{\alpha_{2}}T_{\alpha_{2}}(A)
=u^{\{i,j\}}+u^{\{j,k\}}+u^{ \{i,k\}} 
\]
and thus we get
\begin{align*}
P_{u}(A)=\exp\left(
\sum_{\alpha_{1}\in\mathcal{S}_{1}^{m}}u^{\alpha_{1}}T_{\alpha_{1}}(A)
+\sum_{\alpha_{2}\in\mathcal{S}_{2}^{m} }u^{\alpha_{2}}T_{\alpha_{2}}(A)
+\log \mathrm{det}R_{\{i,j,k\}}-
\sum_{\alpha_{2}\subset (i,j,k)}u^{\alpha_{2}}T_{\alpha_{2}}(A)
\right).
\end{align*}
Together with the formula
\[
\log \mathrm{det} R_{\{i,j,k\}}
=\log \mathrm{det} 
\left(
I_{3\times 3}+
(
\sqrt{1-e^{u^{\{a,b\} }}}
)_{\{a,b\}\subset \{i,j,k\}}
\right),
\]
we complete Case 3.

Case 4 ($|A|=k \ge 4$): 
For $A=\mathcal{I}_{k}$ and $l\le k$, 
we decompose $A$ as 
\[
A=\mathcal{I}_{k}=\cup_{\mathcal{J}_{l}\subset \mathcal{I}_{k}}\mathcal{J}_{l}
\]
and we get
\[
\sum_{\mathcal{J}_{l}\subset \mathcal{I}_{k}}
\theta^{\mathcal{J}_{l}}(u)T_{\mathcal{J}_{l}}(A)
=
\sum_{\mathcal{J}_{l}\in\mathcal{S}_{l}^{m}}
\theta^{\mathcal{J}_{l}}(u)T_{\mathcal{J}_{l}}(A)
=\sum_{\mathcal{J}_{l}\subset \mathcal{I}_{k}}
\theta^{\mathcal{J}_{l}},
\]
which, together with the same procedure as in Case 3, completes the proof of (a).
The proof of (b) is obvious.
\end{proof}

\begin{proof}[Proof of Corollary \ref{cor: e-curvature}]

Observe that we have
\begin{align*}
B=\frac{\partial \theta }{\partial u}=
\begin{pmatrix}
    I_{m\times m} & 0_{m \times m(m-1)/2} \\
    0_{m(m-1)/2 \times m} & I_{m(m-1)/2 \times m(m-1)/2}\\
    0_{ (2^{m}-1-m(m+1)/2) \times m} & C
\end{pmatrix}
\end{align*}
where $C$ is the $((2^{m}-1-m(m+1)/2) \times m(m-1)/2)$-matrix that corresponds to $\partial \theta^{\mathcal{I}_{k}} /\partial u^{\alpha_{2}}$ ($\mathcal{I}_{k}\in\mathcal{S}_{k}^{m}$, $3\le k \le m$, $\alpha_{2}\in\mathcal{S}_{2}^{k}$).
So, the Jacobian matrix $\partial \theta/\partial u$ does not depend on $(u^{\alpha_{1}})_{\alpha_{1}\in\mathcal{S}_{1}^{m}}$.
Together with equation (\ref{eq: e-curvature}), this completes the proof.
\end{proof}

\begin{proof}[Proof of Corollary \ref{cor: Fisher w.r.t. u}]

Consider (a).
Using 
the expression of the score vector of the DPP model with respect to $(u^{\alpha_{1}})_{\alpha_{1}\in\mathcal{S}_{1}^{m}}$
\[
(\partial / \partial u^{\alpha_{1}}) \log P_{u}(A)
= T_{\alpha_{1}}(A)- (\partial/ \partial u^{\alpha_{1}}) \psi(u)
\quad (\alpha_{1}\in\mathcal{S}_{1}^{m}),
\]
and the expression of the Fisher information matrix
$\mathcal{G}(u)= \mathrm{E}_{\theta(u)}[ - \nabla_{u}^{2} \log P_{u}(A)]$,
we get 
\[
\mathcal{G}_{\alpha_{1} \tilde{\alpha}_{1}}(u)
=
\frac{\partial^{2} \psi(u)}{\partial u^{\alpha_{1}}\partial u^{\tilde{\alpha}_{1}}}.
\]
Next, observe that the Jacobian matrix has the form
\begin{align}
B=\frac{\partial \theta }{\partial u}=
\begin{pmatrix}
    I_{m\times m} & 0_{m \times m(m-1)/2} \\
    0_{m(m-1)/2 \times m} & I_{m(m-1)/2 \times m(m-1)/2}\\
    0_{ (2^{m}-1-m(m+1)/2) \times m} & C
\end{pmatrix}
\label{eq: Jacobian}
\end{align}
where $C$ is the $((2^{m}-1-m(m+1)/2) \times m(m-1)/2)$-matrix that corresponds to $\partial \theta^{\mathcal{I}_{k}} /\partial u^{\alpha_{2}}$ ($\mathcal{I}_{k}\in\mathcal{S}_{k}^{m}$, $3\le k \le m$, $\alpha_{2}\in\mathcal{S}_{2}^{k}$).
Denote the Fisher information matrix with respect to $\theta$ of the log-linear model evaluated at $\theta=\theta(u)$ by $M$ and write it in the block-matrix notion:
\begin{align*}
    M=\begin{pmatrix}
        M_{ss} & M_{sp} & M_{sh}\\
        M_{ps} & M_{pp} & M_{ph}\\
        M_{hs} & M_{hp} & M_{hh}
    \end{pmatrix},
\end{align*}
where $M_{ss}$ is an $m\times m$ sub-matrix, $M_{sp}$ is an $m\times m(m-1)/2$ sub-matrix, $M_{sh}$ is an $m\times (2^{m}-1-m(m+1)/2)$ sub-matrix,
and the remaining sub-matrix are defined in similar ways.
Then the expression (\ref{eq: Fisher w.r.t.u}) gives
\begin{align*}
\mathcal{G}(u)=B^{\top}MB
=\begin{pmatrix}
    M_{ss} & M_{sp}+M_{sh}C \\
    M_{ps}+C^{\top}M_{hs}  &  *
\end{pmatrix},
\end{align*}
where $*$ is a suitable matrix.
By using equation (\ref{eq: Fisher information of exp}),
for $\alpha_{1},\tilde{\alpha}_{1}\in\mathcal{S}_{1}^{m}$,
the
$(\alpha_{1},\tilde{\alpha}_{1})$-component of $M_{ss}$ is expressed as
\begin{align*}
(M_{ss})_{\alpha_{1} \tilde{\alpha}_{1}}
=\mathrm{Cov}_{\theta(u)}[T_{\alpha_{1}}
,T_{\tilde{\alpha}_{1}}
]
=\mathrm{det}(K_{\alpha_{1} \cup\tilde{\alpha}_{1}})-\mathrm{det}(K_{\alpha_{1}})
\mathrm{det}(K_{\tilde{\alpha}_{1}}),
\end{align*}
which concludes (a).
Noting that $\partial \theta^{\mathcal{I}_{k}} / \partial u^{\alpha_{2}}$ ($\mathcal{I}_{k}\in\mathcal{S}_{k}^{m}$, $3\le k\le m$, $\alpha_{2}\in\mathcal{S}_{2}^{m}$) is zero if $\alpha_{2}\subset \mathcal{I}_{k}$ (see also Figure \ref{fig: curvedexponential}),
we obtain (b).
\end{proof}

\section{Acknowledgement}
The authors thank Takahiro Kawashima for helpful comments to the early version of this work.
This work is supported by JSPS KAKENHI (21H05205, 21K12067, 23K11024, JP22H03653, 23H04483), MEXT (JPJ010217).

\bibliographystyle{plainnat}
\bibliography{references}

\begin{thebibliography}{28}
\providecommand{\natexlab}[1]{#1}
\providecommand{\url}[1]{\texttt{#1}}
\expandafter\ifx\csname urlstyle\endcsname\relax
  \providecommand{\doi}[1]{doi: #1}\else
  \providecommand{\doi}{doi: \begingroup \urlstyle{rm}\Url}\fi

\bibitem[Affandi et~al.(2014)Affandi, Fox, Adams, and
  Taskar]{Affandi_etal_2014}
R.~Affandi, E.~Fox, R.~Adams, and B.~Taskar.
\newblock Learning the parameters of determinantal point process kernels.
\newblock \emph{Proceedings of the 31th {I}nternational {C}onference on
  {M}achine {L}earning (ICML 2014)}, pages 1224--1232, 2014.

\bibitem[Agresti(1990)]{Agresti1990}
A.~Agresti.
\newblock \emph{Categorical data analysis}.
\newblock Wiley, 1990.

\bibitem[Amari(1982)]{Amari_1982_AoS}
S.~Amari.
\newblock Differential geometry of curved exponential families - curvatures and
  information loss.
\newblock \emph{The Annals of Statistics}, 10:\penalty0 357--385, 1982.

\bibitem[Amari(1985)]{amari1985differential}
S.~Amari.
\newblock \emph{Differential-Geometrical Methods in Statistics}.
\newblock Springer, 1985.

\bibitem[Amari(2001)]{Amari2001}
S.~Amari.
\newblock Information geometry on hierarchy of probability distributions.
\newblock \emph{IEEE transaction on Information Theory}, 47:\penalty0 1701,
  2001.

\bibitem[Amari and Nagaoka(2000)]{Amari_Nagaoka}
S.~Amari and H.~Nagaoka.
\newblock \emph{Methods of Information Geometry}.
\newblock American Mathematical Society, 2000.

\bibitem[Bardenet and Hardy(2020)]{Bardenet_Hardy_2020_AAP}
R.~Bardenet and A.~Hardy.
\newblock {M}onte {C}arlo with determinantal point processes.
\newblock \emph{The Annals of Applied Probability}, 30, 2020.

\bibitem[Batmanghelich et~al.(2014)Batmanghelich, Quon, Kulesza, Kellis,
  Golland, and Bornn]{Batmanghelich2014-fm}
N.~Batmanghelich, G.~Quon, A.~Kulesza, M.~Kellis, P.~Golland, and L.~Bornn.
\newblock Diversifying sparsity using variational determinantal point
  processes, 2014.
\newblock arXiv:1411.6307.

\bibitem[Borodin and Rains(2005)]{borodin2005}
A.~Borodin and E.~Rains.
\newblock Eynard–{{Mehta theorem}}, {{Schur process}}, and {{their Pfaffian
  analogs}}.
\newblock \emph{Journal of Statistical Physics}, 121:\penalty0 291--317, 2005.

\bibitem[Brunel et~al.(2017)Brunel, Moitra, Rigollet, and
  Urschel]{Brunel_etal_2017_colt}
V.~Brunel, A.~Moitra, P.~Rigollet, and J.~Urschel.
\newblock Rates of estimation for determinantal point processes.
\newblock \emph{Proceedings of the 2017 Conference on Learning Theory (COLT
  2017)}, 65:\penalty0 343--345, 2017.

\bibitem[Dupuy and Bach(2018)]{dupuy2018}
C.~Dupuy and F.~Bach.
\newblock Learning {{determinantal point processes}} in {{sublinear time}}.
\newblock In \emph{Proceedings of the {{Twenty-First International Conference}}
  on {{Artificial Intelligence}} and {{Statistics}} (AISTATS 2018)}, pages
  244--257, 2018.

\bibitem[Efron(1975)]{Efron_1975_AoS}
B.~Efron.
\newblock Defining the curvature of a statistical problem (with application to
  second order efficiency) (with discussion).
\newblock \emph{The Annals of Statistics}, 3:\penalty0 1189--1242, 1975.

\bibitem[Friedman et~al.(2024)Friedman, Sturmfels, and
  Zubkov]{Friedman_etal_2024}
H.~Friedman, B.~Sturmfels, and M.~Zubkov.
\newblock Likelihood geometry of determinantal point processes.
\newblock \emph{Algebraic statistics}, 15:\penalty0 15--25, 2024.

\bibitem[Gartrell et~al.(2016)Gartrell, Paquet, and
  Koenigstein]{Gartrell_etal_2016_RecSys}
M.~Gartrell, U.~Paquet, and N.~Koenigstein.
\newblock Bayesian low-rank determinantal point processes.
\newblock \emph{Proceedings of the 10th ACM Conference on Recommender Systems,
  {RecSys}'16}, pages 349--356, 2016.

\bibitem[Gillenwater et~al.(2014)Gillenwater, Kulesza, Fox, and
  Taskar]{Gillenwater_etal_2014}
J.~Gillenwater, A.~Kulesza, E.~Fox, and B.~Taskar.
\newblock Expectation-maximization for learning determinantal point processes.
\newblock \emph{Proceedings of the 27th international conference on {N}eural
  {I}nformation {P}rocessing {S}ystems (NIPS’14)}, pages 3149--3157, 2014.

\bibitem[Grigorescu et~al.(2022)Grigorescu, Juba, Wimmer, and
  Xie]{Grigorescu_etal_2022}
E.~Grigorescu, B.~Juba, K.~Wimmer, and N.~Xie.
\newblock Hardness of maximum likelihood learning of {DPP}s.
\newblock \emph{Proceedings of 35th {C}onference on {L}earning {T}heory
  (COLT2022)}, pages 3800--3819, 2022.

\bibitem[Kawashima and Hino(2023)]{Kawashima_Hino_2023_TMLR}
T.~Kawashima and H.~Hino.
\newblock Minorization-maximization for learning determinantal point processes.
\newblock \emph{Transactions on Machine Learning Research}, 2023.

\bibitem[Kojima and Komaki(2016)]{Kojima_Komaki_2016_SS}
M.~Kojima and F.~Komaki.
\newblock Determinantal point process priors for {B}ayesian variable selection
  in linear regression.
\newblock \emph{Statistica Sinica}, 26:\penalty0 97--117, 2016.

\bibitem[Kulesza and Taskar(2012)]{KuleszaandTasker_2012}
A.~Kulesza and B.~Taskar.
\newblock \emph{Determinantal point processes for machine learning}.
\newblock Now Publishers Inc., 2012.

\bibitem[Lin and Bilmes(2012)]{Lin_Bilmes_2012_UAI}
H.~Lin and J.~Bilmes.
\newblock Learning mixtures of submodular shells with application to document
  summarization.
\newblock \emph{Proceedings of the Twenty-Eighth Conference on {U}ncertainty in
  {A}rtificial {I}ntelligence (UAI 2012)}, pages 479--490, 2012.

\bibitem[Mariet and Sra(2016)]{Mariet_Sra_2016_NIPS}
Z.~Mariet and S.~Sra.
\newblock Kronecker determinantal point processes.
\newblock \emph{{A}dvances in {N}eural {I}nformation {P}rocessing {S}ystems 29
  (NeurIPS 2016)}, pages 2694--2702, 2016.

\bibitem[Marshall and Olkin(1968)]{Marshall_Olkin_1968_NM}
A.~Marshall and I.~Olkin.
\newblock Scaling of matrices to achieve specified row and column sums.
\newblock \emph{Numerische Mathematik}, 12:\penalty0 83--90, 1968.

\bibitem[Reeds(1975)]{Reeds_1975}
J.~Reeds.
\newblock Discussion to {E}fron’s paper.
\newblock \emph{Annals of Statistics}, pages 1234--1238, 1975.

\bibitem[Ročkov\'{a} and George(2022)]{RockovaandGeorge_2022}
V.~Ročkov\'{a} and E.~George.
\newblock Determinantal priors for variable selection.
\newblock In A.~Carriquiry, J.~Tanur, and W.~Eddy, editors, \emph{Statistics in
  the Public Interest}. Springer, 2022.

\bibitem[Sei(2011)]{Sei_2011}
T.~Sei.
\newblock {E}fron's curvature of the structural gradient model.
\newblock \emph{Journal of Japan Statistical Society}, 41:\penalty0 51--66,
  2011.

\bibitem[Sei and Yano(2024)]{Sei_Yano_2024}
T.~Sei and K.~Yano.
\newblock Minimum information dependence modeling.
\newblock \emph{Bernoulli (accepted)}, 2024.

\bibitem[Sugiyama et~al.(2016)Sugiyama, Nakahara, and
  Tsuda]{Sugiyama_Nahakara_Tsuda_2016}
M.~Sugiyama, H.~Nakahara, and K.~Tsuda.
\newblock Information decomposition on structured space.
\newblock \emph{IEEE International Symposium on Information Theory (ISIT)},
  pages 575--579, 2016.

\bibitem[Urschel et~al.(2017)Urschel, Brunel, Moitra, and
  Rigollet]{Urschel_etal_2017}
J.~Urschel, V.~Brunel, A.~Moitra, and P.~Rigollet.
\newblock Learning determinantal point processes with moments and cycles.
\newblock \emph{Proceedings of the 34th International Conference on Machine
  Learning (ICML 2017)}, pages 3511--3520, 2017.

\end{thebibliography}

\end{document}